\newtheorem{thm}{Theorem}[section]
\newtheorem{prop}[thm]{Proposition}
\newtheorem{lem}[thm]{Lemma}
\newtheorem{cor}[thm]{Corollary}
\theoremstyle{definition}
\newtheorem{defn}[thm]{Definition}
\newtheorem{conj}[thm]{Conjecture}
\newtheorem{prob}[thm]{Problem}
\newtheorem{ex}[thm]{Example}
\theoremstyle{remark}
\newtheorem{rem}[thm]{Remark}
\def\N{\mathbb{N}}
\def\R{\mathbb{R}}
\def\C{\mathbb{C}}
\def\P{\mathbb{P}}
\def\B{\mathbb{B}}
\def\cO{\mathcal{O}}
\def\cU{\mathcal{U}}
\def\id{{\rm id}}
\def\pr{{\rm pr}}
\def\T{{\rm T}}
\def\d{{\rm d}}
\def\Ell{{\rm Ell}}
\def\Aut{\operatorname{Aut}}
\def\Bl{\operatorname{Bl}}
\begin{document}

\title[Elliptic characterization and localization of Oka manifolds]{Elliptic characterization and localization\\of Oka manifolds}
\author{Yuta Kusakabe}
\address{Department of Mathematics, Graduate School of Science, Osaka University, Toyonaka, Osaka 560-0043, Japan}
\email{y-kusakabe@cr.math.sci.osaka-u.ac.jp}
\subjclass[2010]{32E10, 32H02, 32E30, 54C35}
\keywords{Stein manifold, Oka manifold, ellipticity, localization principle}

\begin{abstract}
We prove that Gromov's ellipticity condition $\Ell_1$ characterizes Oka manifolds.
This characterization gives another proof of the fact that subellipticity implies the Oka property, and affirmative answers to Gromov's conjectures.
As another application, we establish the localization principle for Oka manifolds, which gives new examples of Oka manifolds.
In the appendix, it is also shown that the Oka property is not a bimeromorphic invariant.
\end{abstract}

\maketitle

\section{Introduction}\label{sec:intro}

In 1989, Gromov's seminal paper \cite{Gro} on the Oka principle initiated modern Oka theory.
Forstneri\v{c}, L\'{a}russon and others developed his theory into the theory of Oka manifolds (see the survey \cite{Forsurvey} and the comprehensive monograph \cite{ForSMHM}).
We first recall the definition of Oka manifolds.
Throughout this paper, all complex manifolds are taken to be second countable and connected.

\begin{defn}\label{defn:oka}
A complex manifold $Y$ is said to be an {\em Oka manifold} if any holomorphic map from an open neighborhood of a compact convex set $K\subset\C^n$ to $Y$ can be uniformly approximated on $K$ by holomorphic maps $\C^n\to Y$.
\end{defn}

In general, it is difficult to verify the condition of Definition \ref{defn:oka} directly.
Gromov's Oka principle gives a sufficient condition, called {\em ellipticity}, for a manifold to be Oka.
A complex manifold $Y$ is said to be {\em elliptic} if there exists a holomorphic map $s:E\to Y$ defined on the total space of a holomorphic vector bundle over $Y$, such that $s(0_y)=y$ and $s|_{E_y}\to Y$ is a submersion at $0_y$ for each $y\in Y$.
Forstneri\v{c} \cite{Forsubell} proved that more general subellipticity and weak subellipticity imply the Oka property (for the definitions, see \cite[Definition 5.6.13]{ForSMHM}).
It is a well-known problem whether the Oka property can be characterized by ellipticity or its variants.

There is another ellipticity condition introduced by Gromov, which is called {\em Condition $\Ell_1$}.
In contrast to ellipticity mentioned above, this condition can be considered as {\it relative ellipticity}.

\begin{defn}\label{defn:ell1}
A complex manifold $Y$ satisfies {\em Condition $\Ell_1$} if for any Stein manifold $X$ and any holomorphic map $f\in\cO(X,Y)$, there exists a holomorphic map $F:X\times\C^N\to Y$ such that $F(\cdot,0)=f$ and $F(x,\cdot):\C^N\to Y$ is a submersion at $0$ for each $x\in X$.
\end{defn}

It is well-known that the Oka property implies Condition $\Ell_1$ (see \cite[Corollary 8.8.7]{ForSMHM} or Proposition \ref{prop:genell1} below).
Our first main result states that the converse is also true.
Thus Condition $\Ell_1$ is equivalent to the Oka property.

\begin{thm}[Elliptic characterization of Oka manifolds]\label{thm:ell1}
A complex manifold is an Oka manifold if and only if it satisfies Condition $\Ell_1$.
\end{thm}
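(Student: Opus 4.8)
The only nontrivial implication is that Condition $\Ell_1$ implies the Oka property, since the converse is recorded in Proposition \ref{prop:genell1}. Because Definition \ref{defn:oka} is precisely the convex approximation property, the plan is to establish it directly: given a compact convex set $K\subset\C^n$ and a holomorphic map $f$ from a neighborhood of $K$ into $Y$, I would construct entire maps $\C^n\to Y$ approximating $f$ uniformly on $K$. I would fix an exhaustion $K=K_0\subset K_1\subset\cdots$ of $\C^n$ by compact convex sets with $K_{j+1}$ obtained from $K_j$ by attaching a single small convex bump, and — using that $\C^n$ is contractible and $Y$ is connected, so there is no topological obstruction — first extend $f$ to a continuous map $\C^n\to Y$ agreeing with $f$ near $K$. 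The desired entire map is then produced as the limit of holomorphic maps $f_j$ defined on neighborhoods of $K_j$, each approximating the continuous model and the previous stage, with geometrically decreasing errors guaranteeing locally uniform convergence.

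The heart is the inductive step passing from $f_j$ on a neighborhood of $A:=K_j$ to $f_{j+1}$ on a neighborhood of $A\cup B=K_{j+1}$, where $B$ is the convex bump and $C:=A\cap B$ is convex. Here Condition $\Ell_1$ enters decisively: every neighborhood in question is a convex, hence Stein, open set, so applying $\Ell_1$ to $f_j$ (and to auxiliary maps over $B$) yields dominating sprays $F:\Omega_A\times\C^N\to Y$ with $F(\cdot,0)=f_j$ and $F(x,\cdot)$ submersive at $0$. To create a competitor over the bump, I would refine the exhaustion until $B$ is so small that $f_j(C)$ lies in a single coordinate chart of $Y$; composing with the chart and invoking the Oka--Weil theorem on the polynomially convex set $C$ produces a holomorphic map on a neighborhood of $B$ approximating $f_j$ on $C$. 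The two maps are then merged by a gluing (splitting) lemma: since the spray is a submersion in the fibre, the discrepancy over $C$ is realized as a small section-valued map, which is split across the Cartan pair $(A,B)$ and reabsorbed through the sprays over the two pieces, producing a single holomorphic map on a neighborhood of $A\cup B$ that stays close to $f_j$ on $A$.

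I expect the decisive difficulty to be this nonlinear gluing step, and it is precisely where the universality of $\Ell_1$ over \emph{all} Stein source manifolds does the work that a single global spray on $Y$ would otherwise do. Splitting the transition map is a nonlinear equation whose linearization is the (solvable) additive Cousin problem for the Cartan pair; solving it requires the fibre-submersivity furnished by $\Ell_1$ together with precise estimates, via an implicit-function or Banach fixed-point argument in the spirit of Forstneri\v{c}'s splitting lemma. Crucially, the procedure must be iterated — each gluing manufactures new maps over convex pieces, and one applies $\Ell_1$ afresh to obtain sprays over them — so that the construction never needs a spray defined globally on $Y$, only sprays over maps from the convex (Stein) neighborhoods arising at each stage.

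Finally, arranging the bump exhaustion with rapidly decreasing approximation errors and controlling the sprays uniformly would guarantee that the sequence $f_j$ converges locally uniformly on $\C^n$ to an entire map $\C^n\to Y$ approximating $f$ on $K$ as required. By Definition \ref{defn:oka} this shows that $Y$ is Oka, which completes the nontrivial direction and hence the asserted equivalence.
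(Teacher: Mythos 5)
Your first direction (Oka $\Rightarrow \Ell_1$ via Proposition \ref{prop:genell1}) is fine, but the inductive bump step --- the heart of your argument --- has two genuine gaps. First, the competitor over the bump does not exist as described: the Oka--Weil approximant of $\psi\circ f_j$ (where $\psi$ is a chart around $f_j(C)$) is controlled only on $C=A\cap B$; on $B\setminus C$, which is precisely the new territory, its values are arbitrary, so it need not stay inside the chart image and cannot be composed with $\psi^{-1}$ to yield a $Y$-valued map on a neighborhood of $B$. Second, and more fundamentally, the gluing you invoke requires much more than closeness of the two \emph{maps} on $C$: Lemma \ref{lem:gluing} needs the two \emph{sprays} to be close on a neighborhood of $(A\cap B)\times P$, because the splitting is a perturbation argument whose small parameter is the distance between the sprays (equivalently, the distance of the transition map from the identity). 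If you obtain a spray over $B$ by applying $\Ell_1$ afresh to your competitor map, its $t$-derivatives are unrelated to those of the spray over $A$; the transition map is then not a small perturbation of the identity, and the implicit-function/fixed-point argument has nothing to contract on. This is not a technicality: producing a spray over the bump that is close to the given spray over the overlap is exactly the hard point of ``CAP implies Oka,'' and you cannot invoke CAP in dimension $n+N$ (applied to the spray itself, which is Forstneri\v{c}'s route) since that is what you are in the middle of proving. A secondary issue is your exhaustion: passing through compact \emph{convex} sets by single convex bumps whose size is dictated adaptively by the map just constructed is not clearly achievable, since the glued maps' moduli of continuity near the new boundary are uncontrolled and the admissible bump sizes may shrink so fast that the union fails to be all of $\C^n$.

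The paper's proof is organized precisely to avoid both difficulties. It does not induct over an exhaustion at all: it quotes the known reduction to a single-bump statement (Lemma \ref{lem:CAP}) and then runs a connectedness argument in the mapping space --- $\cO(K_0,Y)$ is connected, so it suffices to show that $\overline{\cO(K,Y)|_{K_0}}$ is open --- which guarantees a nearby map $g$ \emph{already defined on all of $K$}; the spray over $B$ is manufactured from $g$, so no map over new territory ever has to be invented. The closeness of sprays is then achieved by a lifting trick: since the $\Ell_1$-spray $F:\Omega\times\C^N\to Y$ is defined on all of $\Omega\times\C^N$ with $\Omega\supset K_0\supset B$, one lifts the spray $G_A$ over $g|_A$ through a fiberwise right inverse $\sigma$ of $(\pr_{1},F)$ (Lemma \ref{lem:section}) to a $\C^N$-\emph{valued} spray, approximates in that linear setting by Oka--Weil (a union of two disjoint compact convex sets is polynomially convex), and pushes the result back down with $F$: the pushforward is automatically defined over all of $\Omega$, in particular over $B$, and automatically close to $G_A$ near $A\cap B$, so Lemma \ref{lem:gluing} applies. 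The global domain $X\times\C^N$ of the spray furnished by $\Ell_1$ --- not just its germ near the zero section --- is the essential ingredient your sketch never uses, and without it the merge step fails.
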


Recently, L\'{a}russon and Truong \cite{LarTru} showed the equivalence between the algebraic versions of $\Ell_1$ and the Oka property (the homotopy Runge property).
An algebraic manifold satisfying these equivalent properties is said to be {\em algebraically Oka} in their paper.
Theorem \ref{thm:ell1} is the holomorphic counterpart of their result.

Theorem \ref{thm:ell1} has various applications.
Since subellipticity implies Condition $\Ell_1$ immediately (cf. \cite[Proposition 8.8.11]{ForSMHM}), the proof of Theorem \ref{thm:ell1} gives another proof of the fact that (weak) subellipticity implies the Oka property (for weak subellipticity, use Theorem \ref{thm:cell1}).
Some other new characterizations and affirmative answers to Gromov's conjectures in \cite[\S 1.4.E$''$]{Gro} are also given in Section \ref{sec:applications}.
As another application, we shall prove the following second main result.
Here, a subset of $Y$ is said to be Zariski open if its complement is a closed complex subvariety.

\begin{thm}[Localization principle for Oka manifolds]\label{thm:localization}
Let $Y$ be a complex manifold.
Assume that each point of $Y$ has a Zariski open Oka neighborhood.
Then $Y$ is an Oka manifold. 
\end{thm}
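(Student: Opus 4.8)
The plan is to verify Condition $\Ell_1$ for $Y$ and invoke Theorem \ref{thm:ell1}. So fix a Stein manifold $X$ and a holomorphic map $f\in\cO(X,Y)$; I must produce a holomorphic $F:X\times\C^N\to Y$ with $F(\cdot,0)=f$ such that $F(x,\cdot)$ is a submersion at $0\in\C^N$ for every $x\in X$. Using the hypothesis together with second countability of $Y$, choose a countable cover $Y=\bigcup_{j\ge 1}U_j$ by Zariski open Oka subsets, and write $A_j:=Y\setminus U_j$ for the corresponding closed complex subvarieties. By Theorem \ref{thm:ell1} each $U_j$ already satisfies Condition $\Ell_1$; the task is to assemble this local data over $Y$ into a single dominating spray over $f$.

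The main step is the following localized construction, which I would prove separately: for any Stein manifold $Z$, any $g\in\cO(Z,Y)$ and any Zariski open Oka subset $U\subset Y$, there is a spray $G:Z\times\C^M\to Y$ over $g$ (that is, $G(\cdot,0)=g$) such that $G(z,\cdot)$ is a submersion at $0$ for every $z$ in the open set $g^{-1}(U)=Z\setminus g^{-1}(A)$, where $A=Y\setminus U$. Granting this, I would build $F$ by an iterated (composed) spray argument. Starting from the trivial spray $G_0=f$ over $X$, suppose inductively that $G_k:X\times\C^{M_k}\to Y$ is a spray over $f$ whose fibre derivative at $0$ is already surjective over $f^{-1}(U_1\cup\cdots\cup U_k)$. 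Applying the localized construction to the Stein manifold $X\times\C^{M_k}$, the map $G_k$, and the set $U_{k+1}$ produces a spray over $G_k$; composing it with $G_k$ yields $G_{k+1}$, a spray over $f$ which now dominates over $f^{-1}(U_1\cup\cdots\cup U_{k+1})$, since composition can only enlarge the image of the fibre derivative. Because $\{U_j\}$ covers $Y$, every $x\in X$ eventually lies in some $f^{-1}(U_1\cup\cdots\cup U_k)$, so a limiting spray will be submersive in the fibre at $0$ over all of $X$.

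Passing from the finite stages $G_k$ to an honest limit $F$ is the first technical point: the fibre dimensions $M_k$ grow, so I would control the iteration along a Stein exhaustion $X=\bigcup_m K_m$, choosing the $(k+1)$-st spray to be uniformly close to $G_k$ off a neighbourhood of the compact region where the new domination by $U_{k+1}$ is actually required. With summable corrections this makes $(G_k)$ converge uniformly on compact sets to a spray $F$ over $f$ that dominates at every point of $X$, as desired.

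The crux, and the place where the Zariski open hypothesis is indispensable, is the localized construction. The natural idea is to apply Condition $\Ell_1$ for the Oka manifold $U$ to the restriction $g|_{g^{-1}(U)}:g^{-1}(U)\to U$; the obstruction is that $g^{-1}(U)=Z\setminus g^{-1}(A)$ is the complement of a closed subvariety in a Stein manifold and hence is not Stein in general, so $\Ell_1$ of $U$ does not apply to it directly. What saves the argument is precisely that $g^{-1}(A)$ is a thin (nowhere dense, analytic) subset: one only needs a spray over all of $Z$ with core $g$ that is submersive in the fibre away from $g^{-1}(A)$, and the spray is permitted to degenerate along this thin set. I expect the heart of the proof to be the construction and holomorphic extension of such a spray across $g^{-1}(A)$, together with the bookkeeping that keeps the composed sprays convergent; everything else is the formal ellipticity machinery packaged by Theorem \ref{thm:ell1}.
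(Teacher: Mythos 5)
Your overall skeleton does match the paper's: your "localized construction" is precisely the paper's Proposition \ref{prop:genell1} (a spray over $g$ dominating on $g^{-1}(U)$, degenerate along $g^{-1}(A)$), and composing such sprays is exactly how the paper merges the local data. But two genuine gaps remain, and the first is where essentially all of the work lies. You state the localized construction and defer its proof, guessing that it amounts to "holomorphic extension of such a spray across $g^{-1}(A)$". That is not the mechanism, and no removable-singularity or extension-across-a-thin-set argument will produce it. The paper's proof takes a Stein neighborhood $\Omega$ of the graph $\Gamma_g$ (Siu's theorem), chooses finitely many vertical holomorphic vector fields $V_k$ generating the vertical tangent bundle over $\Omega$ and functions $g_j$ cutting out $\Omega\cap(X\times A)$, and composes the flows of the fields $g_jV_k$ (with times rescaled by the $g_j$) to obtain a spray defined only on a neighborhood of $(X\times\{0\})\cup\bigl(g^{-1}(A)\times\C^{LN}\bigr)$ that degenerates over $g^{-1}(A)$. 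The step that turns this germ into a globally defined holomorphic spray on $X\times\C^{LN}$ is Forstneri\v{c}'s Oka principle for elimination of intersections (Theorem \ref{thm:okaprinciple}), applied with interpolation to second order along that subvariety while keeping preimages of $A$ fixed --- and this is exactly where the hypothesis that $U=Y\setminus A$ is Oka enters. Without this input your key lemma is unproven, and it carries the entire content of the theorem.

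Second, your passage from a countable cover to a limiting spray does not work. Condition $\Ell_1$ demands a single holomorphic map $X\times\C^N\to Y$ with $N$ finite, whereas your iterates $G_k$ live on $X\times\C^{M_k}$ with $M_k\to\infty$; there is no finite-dimensional object for them to converge to, no matter how small the corrections are (and surjectivity of the fibre differential would in any case not survive a limit without quantitative control). The paper sidesteps this entirely by a compactness reduction that your proposal misses: since Definition \ref{defn:oka} only involves maps from neighborhoods of compact convex sets, $Y$ is Oka as soon as every compact subset of $Y$ has an open Oka neighborhood, so it suffices to treat $Y$ a union of two (hence, by induction, finitely many) Zariski open Oka subsets --- and then your composition argument terminates after finitely many steps, with no limit needed. (In fact the paper is even more economical: with $Y=U\cup V$, one application of Proposition \ref{prop:genell1} gives a spray dominating on $f^{-1}(V)\supset f^{-1}(Y\setminus U)$, and Corollary \ref{cor:relell1} --- itself one more application of Proposition \ref{prop:genell1} plus Theorem \ref{thm:cell1} --- finishes.) Alternatively, you could aim at the criterion of Theorem \ref{thm:cell1} instead of $\Ell_1$ itself: for $f\in\cO(K,Y)$ the compact image $f(K)$ is covered by finitely many $U_j$, which again makes the iteration finite. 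With these two repairs your argument becomes, in substance, the paper's proof.
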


Using the idea of Gromov \cite[\S 3.5.B]{Gro}, Forstneri\v{c} \cite{Forsubell} proved the localization principle for algebraic subellipticity.
Since algebraic subellipticity is also equivalent to the algebraic Oka property (L\'arusson-Truong \cite{LarTru}), it can be also viewed as the localization principle for algebraically Oka manifolds (see \cite[Remark 2\,(b)]{LarTru}).
Theorem \ref{thm:localization} is the holomorphic counterpart of this localization principle.

Theorem \ref{thm:localization} gives new examples of Oka manifolds.
Some of these examples are given in Section \ref{sec:applications}, but we give here another example.
It was proved by L\'{a}russon that every smooth toric variety is Oka (see \cite[Theorem 2.17]{Forsurvey}).
Such a variety is Zariski locally isomorphic to an algebraic torus $(\C^*)^n$.
In his proof, however, the quotient construction of toric varieties was used instead of the algebraic localization principle because $(\C^*)^n$ is not algebraically Oka.
Now we have the holomorphic localization principle, which gives the direct proof of this fact.
Moreover, since the algebraic torus $(\C^*)^n$ with finitely many points removed, or blown up at finitely many points, is Oka for $n>1$ (see \cite[Corollary 5.6.18 and Corollary 6.4.13]{ForSMHM}), we have the following result.

\begin{cor}\label{cor:toric}
Let $Y$ be a complex manifold of dimension at least two.
Assume that $Y$ is Zariski locally isomorphic to $(\C^*)^n$.
Then for any finite subset $A\subset Y$ the complement $Y\setminus A$ and the blowup $\Bl_AY$ are Oka.
This holds in particular for any smooth toric variety of dimension at least two.
\end{cor}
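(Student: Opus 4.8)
The plan is to deduce everything from the localization principle (Theorem \ref{thm:localization}) together with the cited fact that $(\C^*)^n$ with finitely many points removed, or blown up at finitely many points, is Oka whenever $n>1$. Write $n=\dim Y\ge 2$, so that $n>1$. By hypothesis every point of $Y$ has a Zariski open neighborhood isomorphic to $(\C^*)^n$, and the whole argument consists in exhibiting, for each of the two manifolds $Y\setminus A$ and $\Bl_AY$, a covering by Zariski open Oka subsets, after which Theorem \ref{thm:localization} applies verbatim.

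For the complement I would take a point $y\in Y\setminus A$ and a Zariski open $U\cong(\C^*)^n$ containing $y$. Since $A$ is finite it is a closed subvariety, so $Y\setminus A$ is Zariski open in $Y$; hence $U\setminus A=U\cap(Y\setminus A)$ is Zariski open in $Y$ and therefore also Zariski open in $Y\setminus A$. Under $U\cong(\C^*)^n$ this set becomes $(\C^*)^n$ with the finite set $A\cap U$ removed, which is Oka by the cited result, and $y\in U\setminus A$ because $y\notin A$. Thus every point of $Y\setminus A$ has a Zariski open Oka neighborhood and $Y\setminus A$ is Oka. For the blowup I would argue analogously using the blowup morphism $\pi:\Bl_AY\to Y$: given a point over some $y\in Y$, choose $U\cong(\C^*)^n$ containing $y$; since blowup is local over the base, $\pi^{-1}(U)$ is the blowup of $U$ at the finite center $A\cap U$, hence isomorphic to $(\C^*)^n$ blown up at finitely many points, which is Oka. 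Moreover $\pi^{-1}(U)$ is Zariski open in $\Bl_AY$ because its complement is $\pi^{-1}(Y\setminus U)$, the preimage of a closed subvariety. Applying Theorem \ref{thm:localization} again gives that $\Bl_AY$ is Oka.

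To justify the final clause I would recall that a smooth toric variety of dimension $n$ is covered by affine charts isomorphic to $\C^k\times(\C^*)^{n-k}$, and then observe that each such chart is itself Zariski locally isomorphic to $(\C^*)^n$: given a point $(a_1,\dots,a_k,b_1,\dots,b_{n-k})$ with all $b_j\neq 0$, removing from the $\C^k$-factor the hyperplanes $\{x_i=c_i\}$ for any choice of $c_i\neq a_i$ identifies a Zariski open neighborhood with $(\C^*)^k\times(\C^*)^{n-k}=(\C^*)^n$. Since such charts are Zariski open in the toric variety, this produces a Zariski open neighborhood isomorphic to $(\C^*)^n$ around every point, so any smooth toric variety of dimension at least two satisfies the hypothesis and the first part applies.

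There is no deep obstacle here, as the result is essentially a packaging of Theorem \ref{thm:localization} with the known examples; the only steps requiring genuine care are the verifications that the local pieces $U\setminus A$ and $\pi^{-1}(U)$ are Zariski open in the ambient manifolds and that they cover, and the slightly non-obvious observation that the affine charts $\C^k\times(\C^*)^{n-k}$ of a smooth toric variety are Zariski locally isomorphic to $(\C^*)^n$. I expect the toric observation to be the most interesting point, precisely because a naive glance suggests that charts such as $\C^n$ are \emph{not} locally toric, whereas shifting the deleted coordinate hyperplanes off the chosen point shows that they are.
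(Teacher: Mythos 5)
Your proof is correct and follows exactly the paper's intended argument: the paper justifies this corollary by the preceding paragraph, namely the localization principle (Theorem \ref{thm:localization}) combined with the cited fact that $(\C^*)^n$ punctured at, or blown up along, a finite set is Oka for $n>1$. The details you supply (Zariski openness of $U\setminus A$ and of $\pi^{-1}(U)$, locality of the blowup over the base, and the hyperplane-shifting trick showing the charts $\C^k\times(\C^*)^{n-k}$ of a smooth toric variety are Zariski locally isomorphic to $(\C^*)^n$) are precisely the verifications the paper leaves implicit.
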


Related to this, L\'arusson and Truong proved that every smooth nondegenerate toric variety is locally flexible \cite[Theorem 3]{LarTru}.
It is known that for any locally flexible algebraic manifold $Y$ and any algebraic submanifold $A\subset Y$ of codimension at least two, the complement $Y\setminus A$ and the blowup $\Bl_AY$ are algebraically Oka \cite{FleKalZai, KalKutTru}.
It seems that, if $Y$ is toric, Corollary \ref{cor:toric} also holds for any algebraic submanifold $A\subset Y$ of codimension at least two.
In the case when $A\subset Y$ is nonalgebraic, Forstneri\v{c} proved that for $n>1$ and every tame discrete subset $D\subset\C^n$ the complement $\C^n\setminus D$ and the blowup $\Bl_D\C^n$ is Oka (cf. \cite[Proposition 5.6.17 and Proposition 6.4.12]{ForSMHM}).
On the other hand, Rosay and Rudin \cite{RosRud} constructed a discrete subset $D\subset\C^n$ such that the complement $\C^n\setminus D$ is not Oka for each $n>1$.
In the appendix, using their technique, we shall construct a discrete subset $D\subset\C^n$ such that the blowup $\Bl_D\C^n$ is not Oka for each $n>1$ (Example \ref{ex:blowup}).
In particular, it shows that the Oka property is not a bimeromorphic invariant.

For $n>1$, the complement $\C^n\setminus\overline{\B^n}$ of the closed unit ball and the complement $\C^n\setminus\R^k$ of a totally real affine subspace are unions of Fatou-Bieberbach domains, but they are not known to be Oka (cf. \cite[Problem 7.6.1]{ForSMHM}).
Thus, related to Theorem \ref{thm:localization}, it is natural to ask the following.
It will be studied in future work.

\begin{prob}
Let $Y$ be a complex manifold.
Assume that each point of $Y$ has an open Oka neighborhood.
Does it follow that $Y$ is an Oka manifold?
\end{prob}

This paper is organized as follows.
In Section \ref{sec:ell1}, we recall a few facts about holomorphic sprays and prove Theorem \ref{thm:cell1} which implies Theorem \ref{thm:ell1}. 
In Section \ref{sec:localization}, we prove Corollary \ref{cor:relell1} which is a refinement of Theorem \ref{thm:ell1}, and prove Theorem \ref{thm:localization} as an application.
Section \ref{sec:applications} contains applications of the results and the methods in the previous sections, such as new characterizations of Oka manifolds, affirmative answers to Gromov's conjectures and new examples of Oka manifolds.
In the appendix, by using the technique of Rosay and Rudin \cite{RosRud}, we construct an example of a non-Oka blowup of $\C^n$ for each $n>1$ (Example \ref{ex:blowup}).

\section{Proof of Theorem \ref{thm:ell1}}\label{sec:ell1}

We first recall the notion of holomorphic sprays and their dominability.
A map from a subset $A\subset X$ of a Stein manifold $X$ to a complex manifold $Y$ is said to be holomorphic if it is the restriction of a holomorphic map from an open neighborhood of $A$ to $Y$.
We denote by $\cO(A,Y)$ the space of holomorphic maps $A\to Y$.
Throughout this paper, spaces of holomorphic maps are endowed with the compact-open topology.

\begin{defn}\label{defn:spray}
Let $X$ be a Stein manifold, $Y$ be a complex manifold and $A\subset X$ be a subset.
\begin{enumerate}
\item A {\em holomorphic spray} over $f\in\cO(A,Y)$ is a holomorphic map $F:\Omega\times P\to Y$ satisfying $F(\cdot,0)|_A=f$, where $A\subset\Omega\subset X$ and $0\in P\subset\C^N$ are open neighborhoods.
\item A holomorphic spray $F:\Omega\times P\to Y$ is said to be {\em dominating on a subset $B\subset\Omega$} if the partial differential $\partial_t|_{t=0} F(x,t):\T_0\C^N\to\T_{F(x,0)}Y$ is surjective for each $x\in B$.
A holomorphic spray $F:\Omega\times P\to Y$ dominating on the whole of $\Omega$ is simply called a {\em dominating spray}.
\end{enumerate}
\end{defn}

Note that the holomorphic map $F:X\times\C^N\to Y$ in Definition \ref{defn:ell1} is nothing but a dominating spray over $f\in\cO(X,Y)$.

In this section we shall prove the following result, which implies Theorem \ref{thm:ell1} because every compact convex set admits a basis of open Stein neighborhoods.

\begin{thm}\label{thm:cell1}
A complex manifold $Y$ is Oka if and only if for any compact convex set $K\subset\C^n$ and any holomorphic map $f\in\cO(K,Y)$ there exist an open neighborhood $\Omega\subset\C^n$ of $K$ and a dominating spray $\Omega\times\C^N\to Y$ over $f$.
\end{thm}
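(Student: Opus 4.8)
The plan is to prove both implications, treating the forward one quickly and concentrating on the converse. For the forward direction, assume $Y$ is Oka. Then $Y$ satisfies Condition $\Ell_1$ by the well-known implication recalled above (\cite[Corollary 8.8.7]{ForSMHM}, or Proposition \ref{prop:genell1}). Given a compact convex set $K\subset\C^n$ and $f\in\cO(K,Y)$, I would choose an open Stein neighborhood $\Omega\subset\C^n$ of $K$ on which $f$ is defined; such neighborhoods exist because compact convex sets are polynomially convex and hence admit a basis of open Stein (indeed convex) neighborhoods. Applying Condition $\Ell_1$ to the Stein manifold $X=\Omega$ produces a holomorphic map $F:\Omega\times\C^N\to Y$ with $F(\cdot,0)=f$ that is a submersion in the fibre direction at $0$ over every point of $\Omega$, which is exactly a dominating spray over $f$.

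For the converse, note that the stated approximation property is precisely the convex approximation property, so it is the very definition of Oka manifold that must be verified from the spray hypothesis. Fix a compact convex $K\subset\C^n$ and a map $f$ holomorphic near $K$; the goal is to approximate $f$ uniformly on $K$ by entire maps $\C^n\to Y$. I would construct the approximant by an inductive exhaustion. Choose a convex-bump exhaustion $K=K_0\subset K_1\subset\cdots$ with $\bigcup_j K_j=\C^n$, where $K_{j+1}=K_j\cup B_j$, each $B_j$ is a compact convex set of small diameter with $K_j\cap B_j$ convex, and $(K_j,B_j)$ is a Cartan pair (standard; cf.\ \cite[\S 5.10]{ForSMHM}). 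The induction carries along a map $f_j$ holomorphic near $K_j$ together with a dominating spray $F_j$ over $f_j$ near $K_j$; the base case $f_0=f$, $F_0$ is supplied by the hypothesis of the theorem since $K_0=K$ is convex. Crucially, the spray $F_j$ is \emph{propagated} through the induction (not re-derived from the hypothesis, as the intermediate sets $K_j$ need not be convex), and the hypothesis is invoked only over the convex bumps $B_j$.

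For the inductive step I would first extend $f_j$ roughly across the bump: since $B_j$ has small diameter, $f_j(K_j\cap B_j)$ lies in a single holomorphic chart of $Y$, so $f_j|_{K_j\cap B_j}$ extends, coordinatewise in that chart, to a holomorphic map $\tilde f$ on a neighborhood of the convex set $B_j$. Because $B_j$ is convex, the hypothesis of the theorem furnishes a dominating spray $F'$ over $\tilde f$ near $B_j$. The heart of the argument is then to \emph{merge} the two dominating sprays $F_j$ (over $f_j$, near $K_j$) and $F'$ (over $\tilde f$, near $B_j$), whose cores agree on the overlap $K_j\cap B_j$, into a single dominating spray $F_{j+1}$ over a map $f_{j+1}$ defined near $K_{j+1}$ with $f_{j+1}\approx f_j$ on $K_j$. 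This merging is carried out by the standard splitting technique for Cartan pairs: the transition between the two sprays on the overlap is a biholomorphic map close to the identity in the fibre variable, which the Cartan splitting lemma factors into a product of maps close to the identity over $K_j$ and over $B_j$, and composing the sprays with these factors yields the merged spray. I expect this merging step, together with the bookkeeping of domination and of the approximation estimates, to be the main obstacle.

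Finally, arranging the approximation at the $j$-th step to be smaller than $2^{-j}\varepsilon$ on $K_{j-1}$, the sequence $(f_j)$ converges uniformly on compact subsets of $\C^n$ to an entire map $g:\C^n\to Y$ with $\sup_K\operatorname{dist}(g,f)<\varepsilon$, establishing the convex approximation property and hence that $Y$ is Oka. The point worth emphasising is that, in contrast to the classical proof that (sub)ellipticity implies the Oka property, the local dominating sprays needed over each bump are provided directly by the relative hypothesis (Condition $\Ell_1$ on convex sets) rather than pulled back from a single spray on $Y$; once they are in hand, the gluing machinery is identical to the classical one.
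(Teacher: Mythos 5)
Your forward direction is fine, but the converse has a genuine gap at exactly the step you flag as "the heart of the argument": the merging of the two sprays. The spray $F_j$ (over $f_j$, near $K_j$) and the spray $F'$ supplied by the hypothesis (over $\tilde f$, near $B_j$) are only known to have (approximately) equal \emph{cores} on the overlap; this does not make them close \emph{as sprays}. The fibrewise transition $\gamma$ with $F_j\circ\gamma=F'$, which exists near $t=0$ by domination of $F_j$ (via a right inverse as in Lemma \ref{lem:section}), has arbitrary $t$-derivative at $t=0$, need not be injective, and need not even have matching parameter dimensions; so your assertion that it is "a biholomorphic map close to the identity in the fibre variable" is unjustified, and the Cartan splitting lemma---which only factors perturbations lying in a small neighborhood of the identity---does not apply. (A secondary problem feeding into this: $f_j|_{K_j\cap B_j}$ does not in general extend holomorphically to a neighborhood of $B_j$, even coordinatewise in a chart; one can only approximate it there by Oka--Weil, so even the cores match only approximately.)

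Here is a sanity check showing the gap is fatal as written: nowhere in your argument do you use that the sprays furnished by the hypothesis have parameter space \emph{all} of $\C^N$. But dominating sprays with a small parameter domain $0\in P\subset\C^N$ exist over any holomorphic map on any compact set with a Stein neighborhood basis, for \emph{every} complex manifold $Y$ (Lemma \ref{lem:localspray}); if your scheme worked with such local sprays, it would prove that every complex manifold is Oka. The fullness of the parameter space is precisely what repairs the merge, and it is the core of the paper's proof: one pulls back the spray on one side through the fibre-preserving right inverse of the spray on the other side (Lemma \ref{lem:section}), obtaining $\C^N$-valued data, manufactures closeness across the bump by Oka--Weil approximation of these $\C^N$-valued maps, and then pushes forward again by composing with the spray---a composition that is defined only because that spray accepts \emph{all} parameter values in $\C^N$, the Oka--Weil approximant being far from $0$ away from the overlap. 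Only after this linearization does the gluing lemma (Lemma \ref{lem:gluing}) apply. Note also that the paper avoids your infinite exhaustion entirely: by Lemma \ref{lem:CAP} it suffices to treat a single hyperplane bump $K_0=\{\lambda\le 0\}\subset K$, and the approximation is then concluded by an openness-plus-connectedness argument in $\cO(K_0,Y)$. A version of your inductive scheme could likely be made to work once the pull-back/Oka--Weil/push-forward mechanism is inserted (and the propagated spray is regenerated at each step via Lemma \ref{lem:localspray} rather than carried through the gluing), but as written the decisive step is missing.
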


To prove this theorem, we briefly review a few facts about holomorphic sprays.
The first two lemmas are special cases of the main tools used in the proof of Forstneri\v{c}'s Oka principle \cite[Theorem 5.4.4]{ForSMHM}.

\begin{lem}[cf. {\cite[Lemma 5.10.4]{ForSMHM}}]\label{lem:localspray}
Let $X$ and $Y$ be complex manifolds, and let $K\subset X$ be a compact subset which has a basis of open Stein neighborhoods.
Then for any holomorphic map $f\in\cO(K,Y)$ there exist open neighborhoods $K\subset\Omega\subset X$ and $0\in P\subset\C^N$, and a dominating spray $\Omega\times P\to Y$ over $f$.
\end{lem}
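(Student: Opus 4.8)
The plan is to produce the spray from spanning vector fields along $f$ together with a finite atlas of $Y$, glued over a Stein base.

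First I would shrink the domain. Since $K$ has a basis of open Stein neighborhoods and $f\in\cO(K,Y)$, there is an open Stein neighborhood $\Omega_0$ of $K$ to which $f$ extends holomorphically. The pullback $f^*\T Y\to\Omega_0$ is then a holomorphic vector bundle over a Stein manifold, so by Cartan's Theorem A it is generated by finitely many global sections $V_1,\dots,V_N\in\Gamma(\Omega_0,f^*\T Y)$; concretely $V_j(x)\in\T_{f(x)}Y$ and the $V_j(x)$ span $\T_{f(x)}Y$ for every $x\in\Omega_0$. A dominating spray over $f$ is a holomorphic map $F$ near $\Omega_0\times\{0\}$ with $F(\cdot,0)=f$ whose partial differential $\partial_t|_{t=0}F(x,t)$ is surjective; it suffices to arrange $\partial_{t_j}|_{t=0}F(x,t)=V_j(x)$, since the $V_j$ span.

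Next I would build the spray locally. The set $f(K)$ is compact, so it is covered by finitely many coordinate charts $\psi_i\colon U_i\xrightarrow{\sim}\psi_i(U_i)\subset\C^m$ of $Y$, where $m=\dim Y$. Over $\Omega_i:=f^{-1}(U_i)$ the linear expression
\[
F_i(x,t)=\psi_i^{-1}\Bigl(\psi_i(f(x))+\sum_{j=1}^N t_j\,\d\psi_i\bigl(V_j(x)\bigr)\Bigr)
\]
is defined for $(x,t)$ in a neighborhood of $\Omega_i\times\{0\}$ and gives a dominating spray over $f|_{\Omega_i}$: indeed $F_i(x,0)=f(x)$ and $\partial_{t_j}|_{t=0}F_i=V_j$, so $\partial_t|_{t=0}F_i(x,\cdot)$ is surjective because the $V_j(x)$ span $\T_{f(x)}Y$.

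Then I would glue these finitely many local sprays into one. Choosing a Cartan cover of $\Omega_0$ subordinate to $\{\Omega_i\}$, I glue two sprays at a time. On an overlap the two dominating sprays $F_i(x,\cdot)$ and $F_{i'}(x,\cdot)$ are submersions $\C^N\to Y$ near $0$ taking the common value $f(x)$ at $0$, so by the implicit function theorem there is a holomorphic transition $\gamma(x,t)$, close to the identity in $t$, with $F_{i'}(x,t)=F_i(x,\gamma(x,t))$ near the overlap times $\{0\}$. Splitting the discrepancy $\gamma(x,t)-t$ as a difference of fibrewise reparametrizations defined over the two members of the Cartan pair, a Cousin-I type problem solved by Cartan's Theorem B on the Stein base $\Omega_0$, and composing each spray with its piece yields two sprays that agree on the overlap and hence define a single dominating spray over the union. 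Iterating over the finite cover produces a dominating spray $\Omega\times P\to Y$ over $f$ on a possibly smaller neighborhood $\Omega$ of $K$. The main obstacle is precisely this gluing: one must solve the splitting holomorphically with enough control that the reparametrized maps remain submersions in $t$ and genuine sprays over $f$, while tracking the unavoidable shrinking of $\Omega$ and of the parameter neighborhood $P$ at each of the finitely many amalgamations. Both the Steinness of $\Omega_0$, for Theorems A and B, and the compactness of $K$, to reduce to finitely many charts and finitely many gluings, are essential here.
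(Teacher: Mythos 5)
Your first step (a Stein extension $\Omega_0\supset K$ of $f$ and finitely many sections $V_1,\dots,V_N$ of $f^*\T Y$ spanning every fibre, by Cartan's Theorem A) is sound, and so are the chart-local sprays $F_i$. The genuine gap is the gluing. To glue $F_i$ and $F_{i'}$ over a Cartan pair you must split the fibrewise transition map \emph{compositionally}, $\gamma=\alpha\circ\beta^{-1}$ with $\alpha$, $\beta$ holomorphic over the respective members, so that $F_i\circ\alpha=F_{i'}\circ\beta$ on the overlap. This is not an additive Cousin-I problem that Cartan's Theorem B solves: Theorem B only handles the linearization, and the actual statement needed is the nonlinear splitting lemma for fibrewise holomorphic maps close to the identity (cf. \cite[Proposition 5.8.1 and Theorem 5.8.3]{ForSMHM}), whose proof is an iteration in which each step is such an additive problem. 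Worse, the closeness-to-the-identity hypothesis of that lemma is not available here and is generally false: two dominating sprays over $f$ are only constrained to agree at $t=0$, so the transition need not be close to the identity, nor even have identity linear part (e.g.\ $F_{i'}(x,t)=F_i(x,2t)$ gives $\gamma(x,t)=2t$); moreover, since $N>\dim Y$, the lift $\gamma$ is not unique and must itself be constructed with care. Repairing this requires normalizing linear parts by a Cartan-type matrix factorization, rescaling the fibre variable, and then invoking the nonlinear splitting lemma --- in effect redeveloping the hardest machinery of \cite[\S 5.8--5.9]{ForSMHM}, which your sketch compresses into one mislabelled sentence.

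The intended proof (this is \cite[Lemma 5.10.4]{ForSMHM}, and the same construction appears verbatim in this paper's proof of Proposition \ref{prop:genell1}) avoids gluing altogether. The graph $\Gamma_f\subset\Omega_0\times Y$ is Stein, so by Siu's theorem \cite{Siu} it has an open Stein neighborhood $W\subset\Omega_0\times Y$. By Cartan's Theorem A on $W$ choose finitely many holomorphic vector fields $V_1,\dots,V_N$ on $W$ tangent to the fibres of $W\to X$ and spanning the vertical tangent space at each point of $\Gamma_f$. Their local flows $\varphi^j_{t_j}$ exist for $|t|<\varepsilon$ uniformly over a compact neighborhood of $\{(x,f(x)):x\in K\}$, and the single, globally defined map
\begin{align*}
F(x,t_1,\dots,t_N)=\pr_2\circ\varphi^1_{t_1}\circ\cdots\circ\varphi^N_{t_N}(x,f(x))
\end{align*}
satisfies $F(\cdot,0)=f$ and $\partial_{t_j}|_{t=0}F(x,t)=V_j(x,f(x))$, which span $\T_{f(x)}Y$; hence $F$ is a dominating spray on $\Omega\times P$ for suitable $\Omega\supset K$ and $P\ni 0$. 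In other words: integrate vertical vector fields near the graph instead of exponentiating them chart by chart --- the composition of flows is automatically a single holomorphic map, so no Cartan pairs, no transition maps, and no splitting are needed.
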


Recall that a {\em Cartan pair} $(A,B)$ in $X$ is a pair of compact subsets of $X$ such that each of $A,B,A\cap B$ and $A\cup B$ has a basis of open Stein neighborhoods in $X$ and the separation condition $\overline{A\setminus B}\cap\overline{B\setminus A}=\emptyset$ is satisfied.

\begin{lem}[{cf. \cite[Remark 5.9.4\,(C)]{ForSMHM}}]\label{lem:gluing}
Let $X$ and $Y$ be complex manifolds, $(A,B)$ be a Cartan pair in $X$, $F_A:\Omega_A\times P\to Y$ and $F_B:\Omega_B\times P\to Y$ be holomorphic sprays over $f_A\in\cO(A,Y)$ and $f_B\in\cO(B,Y)$, respectively, and $\Omega_{A\cap B}\subset\Omega_A\cap\Omega_B$ be an open neighborhood of $A\cap B$.
Assume that $F_A$ is dominating on $A\cap B$, and $F_B:\Omega_B\times P\to Y$ is sufficiently close to $F_A$ on $\Omega_{A\cap B}\times P$.
Then there exists a holomorphic map $f:A\cup B\to Y$ which is close to $f_A$ on $A$.
If in addition $F_B|_{\Omega_0\times P}$ is sufficiently close to a given holomorphic spray $F_0:\Omega_0\times P\to Y$ over $f_0\in\cO(K_0,Y)$, where $K_0\subset B$ is a compact subset and $\Omega_0\subset\Omega_B$ is its open neighborhood, then $f$ can also be chosen close to $f_0$ on $K_0$.
\end{lem}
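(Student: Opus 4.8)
The plan is to reduce the gluing to a single transition problem over the overlap $A\cap B$ and then resolve it by a nonlinear Cartan splitting. The point is that both $F_A$ and $F_B$ parametrize maps clustered near $f_A\approx f_B$ over $A\cap B$; since $F_A$ dominates there, every value produced by $F_B$ near the overlap is also hit by $F_A$, and recording the correspondence between their parameters yields a fibre-preserving transition map close to the identity. Splitting this transition into one part over $A$ and one part over $B$ lets me define the two local representatives of $f$ so that they match on the overlap.

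First I would construct the transition. Since $F_A$ is dominating on $A\cap B$, the differential $\partial_t F_A(x,t)$ is surjective for $(x,t)$ in a neighborhood of $(A\cap B)\times\{0\}$, so it admits holomorphic right inverses there. Writing the desired transition as $\gamma(x,t)=(x,t+\eta(x,t))$ and reading the equation $F_A(x,t+\eta)=F_B(x,t)$ in local coordinates on $Y$, the left-hand side agrees with $F_B$ at $\eta=0$ up to the error $F_A(x,t)-F_B(x,t)$, which is small because $F_B$ is close to $F_A$ on $\Omega_{A\cap B}\times P$. As the $\eta$-differential of the left-hand side is surjective, the holomorphic implicit function theorem applied with a fixed right inverse produces a small holomorphic solution $\eta(x,t)$. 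This gives a fibre-preserving holomorphic map $\gamma$, close to the identity on a neighborhood of $(A\cap B)\times\{0\}$ in $\Omega_{A\cap B}\times P$, with $F_A\circ\gamma=F_B$ there.

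Next I would split $\gamma$. By the compositional (nonlinear) Cartan splitting underlying \cite[Remark 5.9.4\,(C)]{ForSMHM}, a fibre-preserving biholomorphism close to the identity over $A\cap B$ factors as $\gamma=\alpha\circ\beta^{-1}$, where $\alpha(x,t)=(x,a(x,t))$ is defined over $A$, $\beta(x,t)=(x,b(x,t))$ is defined over $B$, and both are close to the identity. On the overlap this yields $a(x,t)=b(x,t)+\eta(x,b(x,t))$, hence $F_A(x,a(x,t))=F_B(x,b(x,t))$. Evaluating at $t=0$ and setting $f=F_A(\cdot,a(\cdot,0))$ on $A$ and $f=F_B(\cdot,b(\cdot,0))$ on $B$ therefore defines a single holomorphic map $f:A\cup B\to Y$. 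Since $\alpha$ is close to the identity, $a(\cdot,0)$ is close to $0$, so $f=F_A(\cdot,a(\cdot,0))$ is close to $F_A(\cdot,0)=f_A$ on $A$. For the additional clause, on $K_0\subset B$ we have $f=F_B(\cdot,b(\cdot,0))$ with $b(\cdot,0)$ close to $0$; combining the closeness of $F_B$ to $F_0$ on $\Omega_0\times P$ with the continuity of $F_0$ shows that $f$ is close to $F_0(\cdot,0)=f_0$ on $K_0$.

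I expect the main obstacle to be the splitting step, namely producing $\alpha$ over $A$ and $\beta$ over $B$ that are genuinely close to the identity with effective control. The linear Cartan splitting over the Cartan pair $(A,B)$ gives bounded additive decompositions, but here the decomposition is with respect to composition of fibre maps, so one must run a Newton-type iteration that alternates linear splitting with corrections while keeping all norms small enough for convergence. The Cartan pair hypotheses, namely the Stein neighborhood bases of $A,B,A\cap B,A\cup B$ together with the separation condition $\overline{A\setminus B}\cap\overline{B\setminus A}=\emptyset$, are precisely what guarantee the bounded linear splittings that make this iteration converge.
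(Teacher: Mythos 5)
Your proposal is correct and follows essentially the same route as the source the paper cites for this lemma (Forstneri\v{c}'s gluing technique, \cite[Remark 5.9.4\,(C)]{ForSMHM}, which the paper invokes without reproving): first produce a fibre-preserving transition map $\gamma$ with $F_A\circ\gamma=F_B$ over the overlap via the domination of $F_A$ and the implicit function theorem, then split $\gamma=\alpha\circ\beta^{-1}$ by the nonlinear Cartan-type splitting lemma over the Cartan pair, and glue $F_A(\cdot,a(\cdot,0))$ with $F_B(\cdot,b(\cdot,0))$; the closeness claims, including the additional clause on $K_0$, are handled exactly as you describe.
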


The following is also well-known, but we give its proof for reader's convenience.
For $j=1,2$, we denote by $\pr_{j}:X_{1}\times X_{2}\to X_{j}$ the projection onto $X_{j}$.

\begin{lem}\label{lem:section}
Let $X$ be a Stein manifold, $Y$ be a complex manifold and $F:X\times\C^N\to Y$ be a dominating spray over $f=F(\cdot,0)\in\cO(X,Y)$.
Then for the fiber-preserving holomorphic map $(\pr_{1},F):X\times\C^N\to X\times Y$ there exists a fiber-preserving holomorphic map $\sigma:U\to X\times\C^N$ from an open neighborhood $U\subset X\times Y$ of the graph $\Gamma_f=\{(x,f(x)):x\in X\}$ such that $(\pr_{1},F)\circ\sigma=\id_U$.
\end{lem}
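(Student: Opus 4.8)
The plan is to produce $\sigma$ as a holomorphic right inverse of the fiber-preserving submersion $(\pr_1,F)$ along the graph, the real work being to globalize the fiberwise inverse function theorem over the noncompact Stein base $X$. First I would record the submersion structure coming from domination. Since $F$ is a dominating spray with $F(\cdot,0)=f$, the partial differential $\Phi_x:=\partial_t|_{t=0}F(x,t):\C^N\to\T_{f(x)}Y$ is surjective for every $x\in X$, and these assemble into a surjective morphism $\Phi:X\times\C^N\to f^*\T Y$ of holomorphic vector bundles over $X$, where $(f^*\T Y)_x=\T_{f(x)}Y$. Writing $n=\dim Y$, the morphism $\Phi$ has constant rank $n$, so $\ker\Phi$ is a holomorphic subbundle of rank $N-n$.

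The crucial point is to cut the fiber $\C^N$ down to a rank-$n$ piece on which $\Phi$ is invertible, and this is exactly where Steinness enters. Because $X$ is Stein, the short exact sequence $0\to\ker\Phi\to X\times\C^N\to f^*\T Y\to 0$ of holomorphic vector bundles splits (the splitting obstruction is a class in a first cohomology group of a coherent sheaf, which vanishes by Cartan's Theorem B). Hence $\ker\Phi$ admits a holomorphic complement $V\subset X\times\C^N$ of rank $n$, and $\Phi|_{V_x}:V_x\xrightarrow{\sim}\T_{f(x)}Y$ is an isomorphism for each $x\in X$.

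Then I would invert the reduced map. Consider the fiber-preserving holomorphic map $G:=(\pr_1,F)|_V:V\to X\times Y$. It sends the zero section of $V$ onto $\Gamma_f$ via $(x,0)\mapsto(x,f(x))$, and its differential in the fiber direction along the zero section is $\Phi|_V$, hence an isomorphism. For each $x_0\in X$ the holomorphic implicit function theorem, with $x$ treated as a parameter, yields on a neighborhood of $(x_0,f(x_0))$ a unique small-norm holomorphic solution $(x,y)\mapsto s(x,y)\in V_x$ of $G(x,s)=(x,y)$ with $s(x,f(x))=0$. By uniqueness of the small solution these local solutions agree on overlaps and patch into a single holomorphic section $s:U\to V\hookrightarrow X\times\C^N$ of $G$ over an open neighborhood $U\subset X\times Y$ of $\Gamma_f$, vanishing on $\Gamma_f$. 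Viewing $\sigma:=s$ in $X\times\C^N$, it is fiber-preserving, satisfies $(\pr_1,F)\circ\sigma=G\circ\sigma=\id_U$, and $\sigma(x,f(x))=(x,0)$, as required.

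The only genuine obstacle is the global construction of $V$. Local inverses of $(\pr_1,F)$ near the graph exist immediately, but a section of a submersion with positive-dimensional fibers is far from unique, so these local inverses cannot be glued directly; uniqueness is recovered only after the fiber dimension has been reduced to $n=\dim Y$. Producing one such reduction valid over the whole noncompact $X$ is precisely the role of Cartan's Theorem B, and once $V$ is fixed the uniqueness of small solutions makes the patching in the final step automatic, with no cocycle condition to verify.
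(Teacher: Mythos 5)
Your proposal is correct and follows essentially the same route as the paper: form the kernel of the fiberwise differential $\partial_t|_{t=0}F(x,t)$, use Steinness of $X$ to split off a holomorphic rank-$\dim Y$ complement (the paper cites the splitting of holomorphic subbundles over Stein manifolds, which is the same Cartan Theorem B argument you give), and invert the restriction of $(\pr_1,F)$ to that complement near the zero section. The only cosmetic difference is that the paper simply asserts this restriction is biholomorphic from a neighborhood of the zero section onto a neighborhood of $\Gamma_f$, whereas you justify it via the parametric implicit function theorem and uniqueness-based patching --- a detail the paper leaves implicit.
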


\begin{proof}
Let $E$ denote the holomorphic vector subbundle of $X\times\C^N$ with fibers
\begin{align*}
E_x=\ker(\partial_t|_{t=0}F(x,t):\C^N=\T_0\C^N\to\T_{f(x)}Y),\ x\in X.
\end{align*}
Since $X$ is Stein, there exists another holomorphic vector subbundle $E'$ of $X\times\C^N$ such that $X\times\C^N=E\oplus E'$ (cf. \cite[Corollary 2.6.6]{ForSMHM}).
Then the map $(\pr_{1},F)|_{E'}:E'\to X\times Y$ restricts to a biholomorphic map from an open neighborhood of the zero section onto an open neighborhood of the graph $\Gamma_f$.
Its inverse holomorphic map $\sigma$ finishes the proof.
\end{proof}

We also use the technique in the previous paper \cite{Kus} where we proved other characterizations of Oka manifolds.
We may reduce the approximation problem in the definition of Oka manifolds (Definition \ref{defn:oka}) to the following lemma (cf. \cite[Proof of Theorem 3.2]{Kus}; see also \cite[Lemma 5.15.4]{ForSMHM}).

\begin{lem}\label{lem:CAP}
A complex manifold $Y$ is Oka if and only if for any compact convex set $K\subset\C^n$ and any affine linear function $\lambda:\C^n\cong\R^{2n}\to\R$, every holomorphic map $K_{0}=\{z\in K:\lambda(z)\leq 0\}\to Y$ can be uniformly approximated on $K_{0}$ by holomorphic maps $K\to Y$.
\end{lem}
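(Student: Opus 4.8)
The forward implication is immediate. If $Y$ is Oka, then since $K_{0}=\{z\in K:\lambda(z)\le 0\}$ is itself a compact convex subset of $\C^n$, any $f\in\cO(K_{0},Y)$ extends holomorphically to an open neighborhood of $K_{0}$ and hence, by Definition \ref{defn:oka}, can be uniformly approximated on $K_{0}$ by holomorphic maps $\C^n\to Y$. Restricting these entire maps to a neighborhood of $K$ yields the desired approximations by elements of $\cO(K,Y)$.

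The content lies in the converse, and the plan is the classical exhaustion scheme underlying the convex approximation property. Given a compact convex set $K\subset\C^n$ and a map $f$ holomorphic on a neighborhood of $K$, I would produce an entire approximation by constructing an increasing exhaustion $K=L_{0}\subset L_{1}\subset L_{2}\subset\cdots$ of $\C^n$ by compact convex sets and, by induction on $j$, holomorphic maps $f_{j}$ defined near $L_{j}$ with $f_{0}$ close to $f$ on $K$ and $f_{j+1}$ close to $f_{j}$ on $L_{j}$. The key is to arrange the exhaustion so that each inclusion $L_{j}\subset L_{j+1}$ factors through finitely many \emph{elementary steps}, where an elementary step passes from a compact convex set $C$ to a compact convex set $C'\supset C$ with $C=C'\cap\{\lambda\le 0\}$ for some affine linear $\lambda$. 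Indeed, applying the hypothesis with $C'$ in the role of $K$ and $C=C'\cap\{\lambda\le 0\}$ in the role of $K_{0}$ yields, at each elementary step, a holomorphic map near $C'$ approximating the current map on $C$.

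The main obstacle is the geometric reduction: realizing an arbitrary enlargement of compact convex sets through elementary half-space steps. I would handle this by first passing from $K$ to a large cube and then exhausting $\C^n$ by cubes. A cube is grown to a larger cube by extending one face at a time, and each such face-extension is precisely an elementary step, the smaller cube being the part of the larger one cut out by the affine function vanishing on the translated face. The initial passage from the given convex set $K$ to an enclosing cube would be achieved by successively attaching convex caps along supporting hyperplanes of the current set, each attachment being arranged to meet the current set inside the supporting hyperplane so that it is again an elementary step. Verifying that finitely many such attachments suffice, while keeping every intermediate set convex, is the delicate point of the argument.

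Finally, the analytic convergence is routine. Fixing a complete distance function on $Y$, I would carry out each elementary approximation closely enough that the increments $\sup_{L_{j}}d(f_{j+1},f_{j})$ are summable. Then on every $L_{j}$ the sequence $(f_{k})_{k\ge j}$ is uniformly Cauchy, so $(f_{j})$ converges locally uniformly on $\C^n$ to a holomorphic map $F\colon\C^n\to Y$ which approximates $f$ on $K$ as closely as desired. This shows that $Y$ is Oka and completes the proof.
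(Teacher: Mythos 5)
Your outline (forward direction from convexity of $K_0$; converse by exhausting $\C^n$ with cubes, factoring enlargements of compact convex sets into half-space ``elementary steps'', and making the errors summable against a complete metric on $Y$) is the right one, and it is essentially the scheme of the proofs the paper cites for this lemma (\cite[Proof of Theorem 3.2]{Kus}, \cite[Lemma 5.15.4]{ForSMHM}); the cube-to-cube face-pushing and the convergence argument are fine as you state them. However, the step you flag as ``the delicate point'' --- passing from the given compact convex set $K$ to an enclosing cube by elementary steps --- is not merely delicate: as you set it up, it is impossible. If $C\subset C'$ is an elementary step, i.e.\ $C=C'\cap\{\lambda\le 0\}$ with $C'\neq C$, and $C$ has nonempty interior, choose $q\in\operatorname{int}C$ with $\lambda(q)<0$ and $p\in C'\setminus C$ (so $\lambda(p)>0$); the segment $[q,p)$ lies in $\operatorname{int}C'$ and crosses $\{\lambda=0\}$ at a point $z\in\operatorname{int}C'$, so $C\supseteq C'\cap\{\lambda=0\}$ contains a $(2n-1)$-dimensional flat piece of the hyperplane around $z$, and this piece lies in the boundary of $C$. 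Hence a closed ball, or any strictly convex body, admits \emph{no} nontrivial elementary enlargement whatsoever: your ``caps along supporting hyperplanes'' cannot even begin when $K$ is a ball. More structurally, if $K=C_0\subset C_1\subset\cdots\subset C_m=Q$ is any chain of elementary steps ending at a cube, then unwinding $C_j=C_{j+1}\cap\{\lambda_j\le 0\}$ gives $K=Q\cap H_1\cap\cdots\cap H_m$ for half-spaces $H_j$; that is, $K$ would have to be a polytope.

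The missing idea is to use the hypothesis you never exploit for this purpose: the map to be approximated is holomorphic on an \emph{open} neighborhood $U$ of $K$, so you are free to replace $K$ by a larger set inside $U$. By Hahn--Banach separation and compactness there is a compact convex polytope $P$ with $K\subset P\subset U$: writing $P=Q\cap H_1\cap\cdots\cap H_m$ for a large cube $Q\supset P$ and finitely many closed half-spaces $H_j$, the inclusion $P\subset Q$ \emph{does} factor into $m$ elementary steps whose intermediate sets $Q\cap H_1\cap\cdots\cap H_j$ are compact convex polytopes. Since $f$ is holomorphic near $P$, you can start the induction at $P$, reach the cube $Q$ after these $m$ steps, and from there your cube-growing exhaustion and Cauchy-sequence argument apply verbatim. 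With this repair the proof is correct and coincides with the standard argument behind the cited references.
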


\begin{proof}[Proof of Theorem \ref{thm:cell1}]
We only need to prove the ``if'' part.
We shall verify the condition of Lemma \ref{lem:CAP}.
Let $K\subset\C^n$ be a compact convex set and $\lambda:\C^n\to\R$ be an affine linear function.
Set $K_0=\{z\in K:\lambda(z)\leq 0\}$.
We would like to show that $\cO(K,Y)|_{K_0}\subset\cO(K_0,Y)$ is dense, where $\cO(K,Y)|_{K_0}=\{\varphi|_{K_0}:\varphi\in\cO(K,Y)\}$.
Note that $\cO(K_0,Y)$ is connected because $K_0$ is convex and $Y$ is connected.
Therefore it suffices to prove that the closure $\overline{\cO(K,Y)|_{K_0}}$ is open in $\cO(K_0,Y)$.
Take $f\in\overline{\cO(K,Y)|_{K_0}}$ arbitrarily.
By assumption, there exist an open neighborhood $\Omega\subset\C^n$ of $K_0$ and a dominating spray $F:\Omega\times\C^N\to Y$ over $f$.
Shrinking $\Omega\supset K_0$ if necessary, we may assume that $\Omega$ is a bounded convex domain.
This spray defines a fiber-preserving map $(\pr_{1},F):\Omega\times\C^N\to\Omega\times Y$.
Throughout this proof, we identify maps with sections of trivial bundles without further mention.
Then $(\pr_{1},F)$ defines a continuous map $(\pr_{1},F)_*:\cO(K_0,\C^N)\to\cO(K_0,Y)$ by composition.
Note that, by Lemma \ref{lem:section}, there exists a fiber-preserving holomorphic map $\sigma:U\to\Omega\times\C^N$ from an open neighborhood $U$ of the graph $\Gamma_f\subset\Omega\times Y$ such that $(\pr_{1},F)\circ\sigma=\id_U$.
It implies that the image $(\pr_{1},F)_*(\cO(K_0,\C^N))$ contains an open neighborhood $\{\varphi\in\cO(K_0,Y):(\id,\varphi)(K_0)\subset U\}$ of $f$. 
Thus, to prove that $f$ is an interior point of $\overline{\cO(K,Y)|_{K_0}}$, it suffices to show the inclusion $(\pr_{1},F)_*(\cO(K_0,\C^N))\subset\overline{\cO(K,Y)|_{K_0}}$.

Since $f\in\overline{\cO(K,Y)|_{K_0}}$, there exists a holomorphic map $g\in\cO(K,Y)$ such that $(\id,g)(K_0)\subset U$.
Take a small $\varepsilon>0$ such that $\{z\in K:\lambda(z)\leq 2\varepsilon\}\subset\Omega\cap (\id,g)^{-1}(U)$.
Set $A=\{z\in K:\lambda(z)\geq\varepsilon\}$ and $B=\{z\in K:\lambda(z)\leq 2\varepsilon\}$.
Clearly, $(A,B)$ is a Cartan pair in $\C^n$ with $A\cup B=K$.
By Lemma \ref{lem:localspray}, there exist open neighborhoods $A\subset\Omega_A\subset\C^n$ and $0\in P\subset\C^L$, and a dominating spray $G_A:\Omega_A\times P\to Y$ over $g|_A$.
The first projection $\pi:\Omega\times P\to\Omega$ defines the pullbacks $\pi^*(\pr_{1},F):(\Omega\times P)\times\C^N\to(\Omega\times P)\times Y$ and $\pi^*\sigma:(\Omega\times P)\times_\Omega U\to(\Omega\times P)\times\C^N$ of the fiber-preserving maps $(\pr_{1},F)$ and $\sigma$, respectively.
Note that $\pi^*(\pr_{1},F)\circ\pi^*\sigma=\id_{(\Omega\times P)\times_\Omega U}$ still holds.
Since $(\id,g)(A\cap B)\subset U$, shrinking $P\ni 0$ if necessary, we may assume that there exists an open neighborhood $\Omega_{A\cap B}\subset\Omega_A\cap\Omega$ of $A\cap B$ such that $(\id,G_A(\cdot,t))(\Omega_{A\cap B})\subset U$ for each $t\in P$.
Then $G_A|_{\Omega_{A\cap B}\times P}$ can be considered as a section of $(\Omega\times P)\times_\Omega U\to\Omega\times P$.
Thus, as above, we may consider $G_A'=(\pi^*\sigma)_*(G_A|_{\Omega_{A\cap B}\times P})\in\cO(\Omega_{A\cap B}\times P,\C^N)$ by composition.
By definition, it satisfies $(\pi^*(\pr_{1},F))_*(G_A')=G_A|_{\Omega_{A\cap B}\times P}$.

Take arbitrary $\varphi_0\in\cO(K_0,\C^N)$ and assume that $\varphi_0$ is defined on an open neighborhood $\Omega_0\subset\Omega$ of $K_0$.
Consider the spray $\Phi_0=\varphi_0\circ\pi:\Omega_0\times P\to\C^N$ over $\varphi_0$.
Shrinking $\Omega_0\supset K_0$, $\Omega_{A\cap B}\supset A\cap B$ and $P\ni 0$ if necessary, we may assume that they are convex and $\Omega_0\cap\Omega_{A\cap B}=\emptyset$.
Recall that every union of two disjoint compact convex sets is polynomially convex.
Thus, by Oka-Weil approximation theorem, there exists a holomorphic spray $G_B':\Omega\times P\to\C^N$ which is sufficiently close to $G_A'$ on $\Omega_{A\cap B}\times P$ and sufficiently close to $\Phi_0$ on $\Omega_0\times P$.
Then the spray $G_B=(\pi^*(\pr_{1},F))_*(G_B'):\Omega\times P\to Y$ is sufficiently close to $G_A$ on $\Omega_{A\cap B}\times P$ and sufficiently close to the spray $(\pi^*(\pr_{1},F))_*(\Phi_0)$ over $(\pr_{1},F)_*(\varphi_0)$ on $\Omega_0\times P$.
Then by Lemma \ref{lem:gluing}, there exists a holomorphic map from $K=A\cup B$ to $Y$ which is close to $(\pr_{1},F)_*(\varphi_0)$ on $K_0$.
Thus we obtain the desired inclusion $(\pr_{1},F)_*(\cO(K_0,\C^N))\subset\overline{\cO(K,Y)|_{K_0}}$.
\end{proof}

The above proof shows the following approximation theorem which may be of independent interest.

\begin{thm}
Let $K\subset\C^n$ be a compact convex set, $\lambda:\C^n\to\R$ be an affine linear function, $Y$ be a complex manifold and $F:\Omega\times\C^{N}\to Y$ be a dominating spray over $f\in\overline{\cO(K,Y)|_{K_0}}$ where $K_{0}=\{z\in K:\lambda(z)\leq 0\}$.
Then for any holomorphic map $\varphi\in\cO(K_0,\C^N)$ the inclusion $F\circ(\id,\varphi)\in\overline{\cO(K,Y)|_{K_0}}$ holds.
\end{thm}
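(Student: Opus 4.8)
The plan is to observe that this statement is precisely the inclusion $(\pr_1,F)_*(\cO(K_0,\C^N))\subset\overline{\cO(K,Y)|_{K_0}}$ established inside the proof of Theorem \ref{thm:cell1}, and to reproduce that argument with the given dominating spray $F$ playing the role of the spray constructed there. Indeed, identifying maps with sections of trivial bundles as in that proof, the composition $F\circ(\id,\varphi)$ is exactly the image $(\pr_1,F)_*(\varphi)$ of $\varphi\in\cO(K_0,\C^N)$ under pushforward by the fiber-preserving map $(\pr_1,F):\Omega\times\C^N\to\Omega\times Y$.

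First I would shrink $\Omega\supset K_0$ to a bounded convex domain and apply Lemma \ref{lem:section} to $F$, obtaining a fiber-preserving section $\sigma:U\to\Omega\times\C^N$ of $(\pr_1,F)$ over an open neighborhood $U$ of the graph $\Gamma_f$. Since $f\in\overline{\cO(K,Y)|_{K_0}}$, there is some $g\in\cO(K,Y)$ with $(\id,g)(K_0)\subset U$; fixing $\varepsilon>0$ small enough that $\{z\in K:\lambda(z)\leq 2\varepsilon\}\subset\Omega\cap(\id,g)^{-1}(U)$, I set up the Cartan pair $A=\{z\in K:\lambda(z)\geq\varepsilon\}$, $B=\{z\in K:\lambda(z)\leq 2\varepsilon\}$ in $\C^n$ with $A\cup B=K$ and $K_0\subset B$.

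Next, Lemma \ref{lem:localspray} supplies a dominating spray $G_A:\Omega_A\times P\to Y$ over $g|_A$. Pulling back $(\pr_1,F)$ and $\sigma$ along $\pi:\Omega\times P\to\Omega$ and lifting $G_A$ near $A\cap B$ through the pulled-back section produces $G_A'\in\cO(\Omega_{A\cap B}\times P,\C^N)$ with $(\pi^*(\pr_1,F))_*(G_A')=G_A|_{\Omega_{A\cap B}\times P}$. For the prescribed $\varphi$ I form $\Phi_0=\varphi\circ\pi$; after shrinking so that $\Omega_0\supset K_0$ and $\Omega_{A\cap B}\supset A\cap B$ are disjoint convex sets, the Oka--Weil theorem on their polynomially convex union yields $G_B'$ close to $G_A'$ on $\Omega_{A\cap B}\times P$ and close to $\Phi_0$ on $\Omega_0\times P$. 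Pushing forward gives a spray $G_B=(\pi^*(\pr_1,F))_*(G_B')$ close to $G_A$ on $\Omega_{A\cap B}\times P$ and close to the spray over $(\pr_1,F)_*(\varphi)=F\circ(\id,\varphi)$ on $\Omega_0\times P$.

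Finally, since $G_A$ is a dominating spray (hence dominating on $A\cap B$) and $K_0\subset B$, Lemma \ref{lem:gluing} glues $G_A$ and $G_B$ into a holomorphic map $K\to Y$ approximating $F\circ(\id,\varphi)$ on $K_0$, whence $F\circ(\id,\varphi)\in\overline{\cO(K,Y)|_{K_0}}$. Because every step coincides with one already carried out for Theorem \ref{thm:cell1}, no essentially new obstacle appears; the one delicate point, as there, is to arrange all the approximations in the gluing step to be simultaneously close enough, and this is exactly what the dominability of $F$ together with Lemma \ref{lem:gluing} provides.
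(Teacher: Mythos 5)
Your proposal is correct and coincides with the paper's own argument: the paper proves this statement by observing that it is exactly the inclusion $(\pr_{1},F)_*(\cO(K_0,\C^N))\subset\overline{\cO(K,Y)|_{K_0}}$ established in the proof of Theorem \ref{thm:cell1}, and you reproduce that proof faithfully (section via Lemma \ref{lem:section}, Cartan pair, local spray from Lemma \ref{lem:localspray}, Oka--Weil approximation, and gluing via Lemma \ref{lem:gluing}) with the given dominating spray $F$ in place of the one furnished there by hypothesis.
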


We end this section with some remarks about Theorem \ref{thm:ell1}.

\begin{rem}
(1) In the previous paper \cite{Kus}, we proved that several holomorphic flexibility properties such as strong $\C$-connectedness characterize Oka manifolds if we generalize these properties to spaces of holomorphic maps.
Since we may consider Condition $\Ell_1$ as strong dominability for spaces of holomorphic maps (cf. \cite[Definition 7.1.7]{ForSMHM}), Theorem \ref{thm:ell1} is a characterization theorem of this sort.
\\
(2) For a Stein manifold $Y$, it is easily seen that ellipticity, the Oka property and Condition $\Ell_1$ are equivalent.
Indeed, if $Y$ satisfies Condition $\Ell_1$ then there exists a dominating spray $s:Y\times\C^{N}\to Y$ over the identity map, hence $Y$ is elliptic.
\end{rem}

\section{Proof of Theorem \ref{thm:localization}}\label{sec:localization}

In the proof of Theorem \ref{thm:localization}, we shall need the following proposition.

\begin{prop}\label{prop:genell1}
Let $Y$ be a complex manifold and $U\subset Y$ be a Zariski open Oka subset.
Then for any Stein manifold $X$ and any holomorphic map $f\in\cO(X,Y)$ there exists a holomorphic spray $X\times\C^N\to Y$ over $f$ dominating on $f^{-1}(U)$. 
\end{prop}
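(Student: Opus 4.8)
The plan is to deduce this relative statement from an unconditional construction of a \emph{local} dominating spray over $f$, and then to use the Oka property of $U$ only to enlarge the parameter domain in the directions lying over $f^{-1}(U)$. First I would produce, with no hypothesis on $Y$, a dominating spray defined near the zero section. Pulling back the tangent bundle $\T Y$ along $f$ gives a holomorphic vector bundle $f^*\T Y$ on the Stein manifold $X$, which is generated by finitely many global sections $V_1,\dots,V_N$; composing $(x,t)\mapsto\sum_i t_iV_i(x)$ with a holomorphic tubular neighborhood of the closed Stein submanifold $\Gamma_f\subset X\times Y$ (this is the construction underlying Lemma \ref{lem:localspray}, carried out globally over $X$) yields a spray $F_0:\Omega_0\to Y$ over $f$, defined on an open neighborhood $\Omega_0\subset X\times\C^N$ of $X\times\{0\}$, whose $t$-differential at $t=0$ surjects onto $\T_{f(x)}Y$ for every $x$, in particular on $f^{-1}(U)$. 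This reduces the proposition to an \emph{extension problem}: enlarge the domain of $F_0$ from the thin set $\Omega_0$ to all of $X\times\C^N$, keeping $F(\cdot,0)=f$ everywhere and keeping the $t$-jet of $F_0$ along $f^{-1}(U)\times\{0\}$ intact, so that domination survives on $f^{-1}(U)$.

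For the extension I would run the standard Oka induction over a normal exhaustion of the Stein manifold $X\times\C^N$ by compacts, gluing successive sprays with Lemma \ref{lem:gluing} along Cartan pairs. The mechanism is exactly that of the proof of Theorem \ref{thm:cell1}: wherever a local dominating spray into the target is available, Lemma \ref{lem:section} furnishes a fibrewise section $\sigma$ that linearises the problem, converting the gluing and approximation steps for $Y$-valued maps into an Oka--Weil approximation for $\C^N$-valued maps. The crucial input is that such local dominating sprays into $U$ exist, by Theorem \ref{thm:cell1} applied to the Oka manifold $U$, precisely over the compact convex pieces whose image under the current spray lies in $U$, that is, over the part of $X$ sitting in $f^{-1}(U)$. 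Over the part lying in $f^{-1}(S)$, where $S=Y\setminus U$, no domination is required, so there I would simply retain the trivial spray $(x,t)\mapsto f(x)$ and let the Cartan-pair separation absorb the transition.

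The hard part will be that $f^{-1}(U)$ is only Zariski open in $X$ and need not be Stein, so one cannot invoke Condition $\Ell_1$ for $U$ on $f^{-1}(U)$ directly; everything must be kept local. Concretely, the Oka property of $U$ is accessed only through the local-spray statement of Theorem \ref{thm:cell1} for maps of compact convex sets into $U$, which never asks $f^{-1}(U)$ to be globally Stein. The delicate bookkeeping is then twofold: to ensure that at each inductive step the Oka property of $U$ is invoked only on pieces whose image lies in $U$ (possible because we only demand domination over $f^{-1}(U)$ while the value $f$ is preserved throughout), and to manage the gluing across the analytic interface $f^{-1}(S)$, where one passes from the nontrivial $U$-valued dominating spray to the trivial one and where Lemma \ref{lem:gluing} must be applied so that the required domination on $A\cap B$ is always supplied from the $f^{-1}(U)$ side. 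Convergence of the inductive scheme on the exhaustion then produces the desired global spray $X\times\C^N\to Y$ over $f$, dominating on $f^{-1}(U)$.
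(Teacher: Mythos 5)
Your reduction loses the one structural property that makes the Oka hypothesis on $U$ usable, and this is a genuine gap, not bookkeeping. The paper's initial spray is \emph{not} built from generators of $f^*\T Y$ alone: it takes generators $V_k$ of the vertical tangent bundle on a Stein neighborhood $\Omega$ of the graph $\Gamma_f$, together with finitely many functions $g_j\in\cO(\Omega)$ cutting out $\Omega\cap(X\times S)$, $S=Y\setminus U$, and composes the flows of the products $g_jV_k$. Because each $g_jV_k$ vanishes on $X\times S$, the resulting local spray $F_0$ satisfies $F_0^{-1}(S)=f^{-1}(S)\times\C^{LN}$: it dominates on $f^{-1}(U)$ \emph{and} respects the partition $Y=U\sqcup S$. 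This preimage control is exactly what allows the extension step to be done in one stroke by Forstneri\v{c}'s Oka principle for elimination of intersections (Theorem \ref{thm:okaprinciple}): one extends $F_0$ continuously with $F_0^{-1}(S)\subset X'=(X\times\{0\})\cup(f^{-1}(S)\times\C^{LN})$ and obtains a holomorphic map agreeing with $F_0$ to second order along $X'$, with the same preimage of $S$; the Oka property of $U=Y\setminus S$ enters only through that theorem. Your $F_0$, dominating everywhere with no cutoff, has an uncontrolled preimage $F_0^{-1}(S)$ -- an analytic subset of the thin neighborhood $\Omega_0$ that is not contained in (nor extendable to) any closed subvariety of $X\times\C^N$ -- so no Oka-type theorem for $U$ applies to your extension problem, and it is not clear that your extension problem is solvable at all.

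The hands-on Cartan-pair induction you propose in its place also fails at two specific points. First, Lemma \ref{lem:gluing} outputs a \emph{map} close to the given data, not a spray, and gives approximation rather than interpolation; it cannot preserve the exact identity $F(\cdot,0)=f$ (required for a spray over $f$), nor the $t$-jet along $f^{-1}(U)\times\{0\}$. Second, and more fundamentally, your treatment of the interface cannot produce domination on all of $f^{-1}(U)$: since $f^{-1}(S)$ is a thin analytic set, \emph{every} piece of a decomposition that meets $f^{-1}(S)$ contains a nonempty open set of points of $f^{-1}(U)$, where domination is demanded; on such pieces you keep the trivial spray $(x,t)\mapsto f(x)$, which dominates nowhere, and a glued map that is merely close to a non-dominating spray inherits no domination there. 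The set where domination is required accumulates on $f^{-1}(S)$, so no locally finite splitting of $X$ into ``pieces over $U$'' and ``pieces over $S$'' can separate the two regimes. This accumulation is precisely the difficulty that the vanishing factors $g_j$ (which make the domination degenerate at the correct rate as one approaches $f^{-1}(S)$) and the jet-interpolation with preimage control in Theorem \ref{thm:okaprinciple} are designed to resolve; without them the argument does not close.
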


Our proof is based on the proof of the implication from the Oka property to $\Ell_1$ (cf. \cite[Corollary 8.8.7]{ForSMHM}).
To generalize it, we need the following Oka principle which is a special case of Forstneri\v{c}'s Oka principle for elimination of intersections (\cite[Theorem 8.6.1]{ForSMHM} with $\Sigma=\Gamma_{f|_{X'}}\cup(X\times A)\subset X\times Y$).
A slightly weaker version was used by Forstneri\v{c} and L\'{a}russon \cite{ForLar} to prove that every stratified Oka manifold is strongly dominable (for the definitions, see \cite[Definition 7.1.7]{ForSMHM}).

\begin{thm}[{cf. \cite[Theorem 8.6.1]{ForSMHM}}]\label{thm:okaprinciple}
Let $X$ be a Stein manifold, $X'\subset X$ be a closed complex subvariety, $Y$ be a complex manifold, $A\subset Y$ be a closed complex subvariety and $f:X\to Y$ be a continuous map which is holomorphic on an open neighborhood of $X'$.
Assume that $Y\setminus A$ is Oka and $f^{-1}(A)\subset X'$.
Then, for each $k\in\N$, there exists a homotopy of continuous maps $f_t:X\to Y,\ t\in[0,1]$ with $f_0=f$ and $f_1\in\cO(X,Y)$ such that for each $t\in[0,1]$ the map $f_t$ is holomorphic on an open neighborhood of $X'$, it agrees with $f$ to order $k$ along $X'$, and it satisfies $f_t^{-1}(A)=f^{-1}(A)$.
\end{thm}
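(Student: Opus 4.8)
The plan is to derive this statement from Forstneri\v{c}'s Oka principle for the elimination of intersections \cite[Theorem 8.6.1]{ForSMHM}, applied to the trivial submersion $\pr_1:X\times Y\to X$ and the subvariety $\Sigma=\Gamma_{f|_{X'}}\cup(X\times A)\subset X\times Y$, and then to read the conclusion back in terms of maps. First I would pass from maps to sections: a continuous map $g:X\to Y$ corresponds to the section $(\id,g):X\to X\times Y$ of $\pr_1$, and this correspondence preserves holomorphy, holomorphy near $X'$, and agreement to order $k$ along $X'$. It therefore suffices to produce the desired homotopy on the level of sections of $\pr_1$.

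Next I would set up and verify the hypotheses of \cite[Theorem 8.6.1]{ForSMHM}. Since $f$ is holomorphic on a neighborhood of the closed subvariety $X'\subset X$, the partial graph $\Gamma_{f|_{X'}}$ is a closed complex subvariety of $X\times Y$, and $X\times A$ is one because $A\subset Y$ is; hence $\Sigma$ is a closed complex subvariety. For the section $(\id,f)$ one computes $(\id,f)^{-1}(\Gamma_{f|_{X'}})=X'$ and $(\id,f)^{-1}(X\times A)=f^{-1}(A)\subset X'$, so that $(\id,f)^{-1}(\Sigma)=X'$. Here the point of adjoining $\Gamma_{f|_{X'}}$ is to enlarge the intersection locus from $f^{-1}(A)$ to all of $X'$, which is exactly what lets the interpolation supplied by Theorem 8.6.1 --- fixing the $k$-jet of the section along its intersection with $\Sigma$ --- take place along the full subvariety $X'$.

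The Oka input enters as follows. Over $X\setminus X'$ the partial graph contributes nothing to $\Sigma$, so each fiber of the complement $(X\times Y)\setminus\Sigma\to X$ over a point of $X\setminus X'$ is $Y\setminus A$, which is Oka by hypothesis; the fibers over $X'$ are $Y\setminus(A\cup\{f(x)\})$, but the section is held fixed there, so no Oka property is needed on those exceptional fibers. This is precisely the situation governed by \cite[Theorem 8.6.1]{ForSMHM}, which yields a homotopy of sections $s_t$ with $s_0=(\id,f)$, $s_1$ holomorphic, each $s_t$ holomorphic near $X'$ and agreeing with $(\id,f)$ to order $k$ along $X'$, and with the intersection $s_t^{-1}(\Sigma)=X'$ preserved throughout. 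Putting $f_t=\pr_2\circ s_t$, the first two assertions are immediate; for $f_t^{-1}(A)=f^{-1}(A)$, the preserved intersection forces $s_t$ to avoid $X\times A$ over $X\setminus X'$, giving $f_t^{-1}(A)\cap(X\setminus X')=\emptyset$, while the order-$k$ agreement gives $f_t|_{X'}=f|_{X'}$ and hence $f_t^{-1}(A)\cap X'=f^{-1}(A)$; since $f^{-1}(A)\subset X'$, these combine to $f_t^{-1}(A)=f^{-1}(A)$.

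I expect the main obstacle to lie not in the translation but in matching the hypotheses of \cite[Theorem 8.6.1]{ForSMHM} exactly: one must confirm that the fiber-ellipticity condition imposed there on the complement of $\Sigma$ is guaranteed by ``$Y\setminus A$ is Oka'' alone once the interpolation locus $X'$ is excised from consideration, and that the theorem's intersection-preservation conclusion, stated for the full subvariety $\Sigma$, can be disentangled into the separate control over $X\times A$ that delivers $f_t^{-1}(A)=f^{-1}(A)$.
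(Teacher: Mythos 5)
Your proposal is exactly the paper's argument: the paper proves this theorem simply by invoking Forstneri\v{c}'s Oka principle for elimination of intersections \cite[Theorem 8.6.1]{ForSMHM} with the same choice $\Sigma=\Gamma_{f|_{X'}}\cup(X\times A)\subset X\times Y$, and your write-up correctly supplies the routine verifications (that $\Sigma$ is a closed subvariety, that $(\id,f)^{-1}(\Sigma)=X'$, that the relevant fibers of the complement over $X\setminus X'$ are $Y\setminus A$, and that preservation of the intersection locus translates into $f_t^{-1}(A)=f^{-1}(A)$) that the paper leaves implicit.
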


\begin{proof}[Proof of Proposition \ref{prop:genell1}]
Set $A=Y\setminus U$.
Since the graph $\Gamma_f\subset X\times Y$ of $f$ is Stein, it has an open Stein neighborhood $\Omega\subset X\times Y$ by Siu's theorem \cite{Siu}.
Let $E\to\Omega$ denote the restriction of the vertical tangent bundle of the trivial bundle $X\times Y\to X$.
Since $\Omega$ is Stein, Cartan's theorem A implies that there exist finitely many generators of $E$, namely holomorphic vector fields $V_k,\ k=1,\ldots,N$ on $\Omega$ tangent to the fibers of $X\times Y\to X$.
For the same reason, there exist finitely many holomorphic functions $g_j\in\cO(\Omega),\ j=1,\ldots,L$ such that
\begin{align*}
\{z\in\Omega:g_j(z)=0,\ j=1,\ldots,L\}=\Omega\cap(X\times A).
\end{align*}
Let $\varphi_t^{j,k}$ denote the local flow of $g_jV_k$.
Since each of $g_jV_k$ vanishes on $\Omega\cap(X\times A)$, each flow fixes $\Omega\cap(X\times A)$ and satisfies $(\varphi_t^{j,k})^{-1}(\Omega\cap(X\times A))\subset\Omega\cap(X\times A)$.

Let $\pr_{2}:X\times Y\to Y$ denote the second projection.
Then
\begin{align*}
F_0'(x,(t_{1,1},\ldots,t_{1,N}),\ldots,(t_{L,1},\ldots,t_{L,N}))=\pr_{2}\circ\varphi_{t_{1,1}}^{1,1}\circ\cdots\circ\varphi_{t_{L,N}}^{L,N}(x,f(x))\in Y
\end{align*}
defines a holomorphic map from an open neighborhood of $X\times\{0\}\subset X\times\C^{LN}$.
Consider $F_0(x,t_1,\ldots,t_L)=F_0'(x,g_1(x,f(x))t_1,\ldots,g_L(x,f(x))t_L)$, where we write $t_j=(t_{j,1},\ldots,t_{j,N})$.
By construction, it is a holomorphic map from an open neighborhood of $X'=(X\times\{0\})\cup(f^{-1}(A)\times\C^{LN})$, which satisfies $F_0(\cdot,0)=f$, $F_0^{-1}(A)=f^{-1}(A)\times\C^{LN}$ and $\partial_t|_{t=0}F_0(x,t)$ is surjective for all $x\in f^{-1}(U)$.
Shrinking the open neighborhood if necessary, we can extend $F_0$ to a continuous map $F_0:X\times\C^{LN}\to Y$ with $F_0^{-1}(A)=f^{-1}(A)\times\C^{LN}\subset X'$.
Then by Theorem \ref{thm:okaprinciple}, there exists a holomorphic map $F_1:X\times\C^{LN}\to Y$ which agrees with $F_0$ to the second order along $X'=(X\times\{0\})\cup(f^{-1}(A)\times\C^{LN})$. 
Clearly, $F_1$ is then a holomorphic spray over $f$ dominating on $f^{-1}(U)$.
\end{proof}

Using Proposition \ref{prop:genell1}, let us prove the following refinement of Theorem \ref{thm:ell1}.
In the case of $U=\emptyset$, the equivalence $(1)\iff(2)$ is nothing but Theorem \ref{thm:ell1}.
As an application of Proposition \ref{prop:genell1} and this refinement, we shall prove Theorem \ref{thm:localization}.

\begin{cor}\label{cor:relell1}
For a complex manifold $Y$ with a Zariski open Oka subset $U\subset Y$, the following are equivalent:
\begin{enumerate}
\item $Y$ is Oka.
\item For any Stein manifold $X$ and any $f\in\cO(X,Y)$, there exists a holomorphic spray $X\times\C^N\to Y$ over $f$ dominating on $f^{-1}(Y\setminus U)$.
\item For any bounded convex domain $\Omega$ and any $f\in\cO(\Omega,Y)$, there exists a holomorphic spray $\Omega\times\C^N\to Y$ over $f$ dominating on $f^{-1}(Y\setminus U)$.
\end{enumerate}
\end{cor}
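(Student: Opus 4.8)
The plan is to prove the cycle $(1)\Rightarrow(2)\Rightarrow(3)\Rightarrow(1)$, the last implication being the only substantial one; the first two are essentially formal once Theorem \ref{thm:ell1} and Proposition \ref{prop:genell1} are in hand. For $(1)\Rightarrow(2)$, I would invoke Theorem \ref{thm:ell1}: if $Y$ is Oka then it satisfies Condition $\Ell_1$, so for any Stein manifold $X$ and any $f\in\cO(X,Y)$ there is a dominating spray $X\times\C^N\to Y$ over $f$ whose partial differential at $0$ is surjective at \emph{every} point of $X$, hence in particular on $f^{-1}(Y\setminus U)$. The implication $(2)\Rightarrow(3)$ is immediate, since every bounded convex domain is a Stein manifold, so $(3)$ is the special case $X=\Omega$ of $(2)$.

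For the main implication $(3)\Rightarrow(1)$ I would verify the criterion of Theorem \ref{thm:cell1}. Given a compact convex set $K\subset\C^n$ and $f\in\cO(K,Y)$, first choose a bounded convex domain $\Omega\supset K$ on which $f$ is holomorphic. By $(3)$ there is a spray $F_1\colon\Omega\times\C^{N_1}\to Y$ over $f$ dominating on $f^{-1}(Y\setminus U)$. The key step is then to feed $F_1$ back into Proposition \ref{prop:genell1}: the product $\Omega\times\C^{N_1}$ is again Stein and $F_1\in\cO(\Omega\times\C^{N_1},Y)$, so Proposition \ref{prop:genell1} produces a spray $G\colon(\Omega\times\C^{N_1})\times\C^{N_2}\to Y$ over $F_1$ dominating on $F_1^{-1}(U)$. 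Setting $\hat{F}(x,s,t)=G((x,s),t)$ then yields a spray over $f$ on $\Omega\times\C^{N_1+N_2}$, since $\hat{F}(x,0,0)=G((x,0),0)=F_1(x,0)=f(x)$.

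It remains to check that $\hat{F}$ dominates at every point of $\Omega$. For $x\in f^{-1}(Y\setminus U)$, the $s$-partial differential $\partial_s|_{0}\hat{F}(x,s,0)=\partial_s|_{0}F_1(x,s)$ (because $G(\cdot,0)=F_1$) is already surjective onto $\T_{f(x)}Y$, so $\hat{F}$ dominates there. For $x\in f^{-1}(U)$, the point $(x,0)$ lies in $F_1^{-1}(U)$ since $F_1(x,0)=f(x)\in U$, so $G$ dominates at $(x,0)$; hence the $t$-partial differential $\partial_t|_{0}\hat{F}(x,0,t)=\partial_t|_{0}G((x,0),t)$ is surjective onto $\T_{f(x)}Y$, and again $\hat{F}$ dominates. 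Thus $\hat{F}$ is a dominating spray over $f$ on the neighborhood $\Omega$ of $K$, and Theorem \ref{thm:cell1} gives that $Y$ is Oka.

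The main obstacle is precisely the combination step: manufacturing from the two \emph{partial} sprays a single spray over $f$ that dominates everywhere, one spray being supplied by $(3)$ over $f^{-1}(Y\setminus U)$ and the complementary domination over $f^{-1}(U)$ coming from the Oka hypothesis on $U$. The clean resolution is the observation that Proposition \ref{prop:genell1} need not be applied to $f$ itself but to $F_1$ regarded as a holomorphic map on the Stein manifold $\Omega\times\C^{N_1}$; because $F_1^{-1}(U)$ contains $f^{-1}(U)\times\{0\}$, the composed spray $\hat F$ acquires exactly the missing domination on the locus where $F_1$ fails to dominate. The remaining points to confirm are routine: that $\Omega$ may be taken bounded convex and $\Omega\times\C^{N_1}$ is Stein, so that both $(3)$ and Proposition \ref{prop:genell1} are applicable.
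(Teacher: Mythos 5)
Your proposal is correct and follows essentially the same route as the paper: the paper also reduces to $(3)\Rightarrow(1)$, applies Proposition \ref{prop:genell1} to the spray $F_1$ viewed as a holomorphic map on the Stein manifold $\Omega\times\C^{N_1}$ to gain domination over $F_1^{-1}(U)\supset f^{-1}(U)\times\{0\}$, and concludes via Theorem \ref{thm:cell1}. Your write-up merely makes explicit the pointwise verification that the composed spray dominates on all of $\Omega$, which the paper leaves implicit.
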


\begin{proof}
It suffices to prove the implication $(3)\implies(1)$.
Let $\Omega$ be a bounded convex domain and $f\in\cO(\Omega,Y)$.
By assumption, there exists a holomorphic spray $F:\Omega\times\C^N\to Y$ over $f$ dominating on $f^{-1}(Y\setminus U)$.
Since $\Omega\times\C^N$ is Stein, there exists a holomorphic spray $\widetilde F:(\Omega\times\C^N)\times\C^L\to Y$ over $F$ dominating on $F^{-1}(U)\supset f^{-1}(U)\times\{0\}$ by Proposition \ref{prop:genell1}.
This is also a holomorphic spray $\widetilde F:\Omega\times\C^{N+L}\to Y$ over $f$ dominating on $\Omega=f^{-1}(Y\setminus U)\cup f^{-1}(U)$.
Therefore $Y$ is an Oka manifold by Theorem \ref{thm:cell1}.
\end{proof}

\begin{proof}[Proof of Theorem \ref{thm:localization}]
Note that a complex manifold $Y$ is Oka if each compact subset of $Y$ has an open Oka neighborhood in $Y$ (see Definition \ref{defn:oka}).
Therefore, it suffices to prove the theorem in the case that $Y$ is a union of two Zariski open Oka subsets $U,V\subset Y$.
Take arbitrary holomorphic map $f:X\to Y$ from a Stein manifold $X$.
By Proposition \ref{prop:genell1}, there exists a holomorphic spray $X\times\C^N\to Y$ over $f$ dominating on $f^{-1}(V)\supset f^{-1}(Y\setminus U).$
Then $Y$ is Oka by Corollary \ref{cor:relell1}.
\end{proof}

\section{Applications}\label{sec:applications}

\subsection{Other new characterizations of Oka manifolds}

Here, we present some other new characterizations of Oka manifolds which follow from the results and the methods in the previous sections.

The condition (2) in the following can be seen as relative weak subellipticity.

\begin{cor}\label{cor:weaksubell1}
For a complex manifold $Y$, the following are equivalent:
\begin{enumerate}
\item $Y$ is Oka.
\item For any Stein manifold $X$, any $f\in\cO(X,Y)$ and any $x_0\in X$, there exist finitely many holomorphic sprays $F_j:X\times\C^{N_j}\to Y,\ j=1,\ldots,n$ over $f$ such that $\sum_{j=1}^n\partial_t|_{t=0}F_j(x_0,t)(\T_0\C^{N_j})=\T_{f(x_0)}Y$.
\end{enumerate}
\end{cor}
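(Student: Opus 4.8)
The plan is to route both implications through the dominating-spray criterion of Theorem~\ref{thm:cell1}, viewing condition~(2) as a \emph{summed} (weak) form of a dominating spray that one must upgrade to a genuine one. The implication $(1)\Rightarrow(2)$ should be immediate: if $Y$ is Oka, then by Theorem~\ref{thm:ell1} it satisfies Condition~$\Ell_1$, so for every Stein manifold $X$ and every $f\in\cO(X,Y)$ there is a single dominating spray $F\colon X\times\C^N\to Y$ over $f$. Taking $n=1$ and $F_1=F$, the differential $\partial_t|_{t=0}F(x_0,t)$ is already surjective at every $x_0\in X$, so the required spanning condition holds trivially.

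For $(2)\Rightarrow(1)$ I would verify the condition of Theorem~\ref{thm:cell1}. Fix a compact convex set $K\subset\C^n$ and a map $f\in\cO(K,Y)$, defined on a bounded convex neighborhood $\Omega\supset K$. For each point $x_0\in K$, condition~(2) applied to the Stein manifold $\Omega$ yields finitely many sprays over $f$ whose vertical differentials at $x_0$ together span $\T_{f(x_0)}Y$; since surjectivity of the summed differential is an open condition, the same family spans on a whole neighborhood of $x_0$. Covering $K$ by finitely many such neighborhoods and taking the union of the corresponding finite families, I obtain sprays $F_1,\dots,F_n\colon\Omega\times\C^{N_j}\to Y$ over $f$ with $\sum_{j=1}^n\partial_t|_{t=0}F_j(x,t)(\T_0\C^{N_j})=\T_{f(x)}Y$ for every $x$ in a neighborhood of $K$; after shrinking $\Omega$ I may assume this summed domination holds on all of $\Omega$.

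The main obstacle is the final step: passing from this globally summed-dominating family to the \emph{single} dominating spray over $f$, with entire parameter, that Theorem~\ref{thm:cell1} requires. This is precisely the composition-of-sprays mechanism underlying the fact that (weak) subellipticity implies the Oka property, and it is the technical heart of the argument. The subtle point is that the $F_j$ are sprays over the \emph{map} $f$ (with base $\Omega$) rather than over $Y$, so they do not compose freely by re-basing, and no single $F_j$ need dominate; one must combine directions that only the sum realizes while preserving the entire parameter domain. I would handle this by adapting the section-and-gluing scheme of Theorem~\ref{thm:cell1} to the family: a spray obtained by composing the $F_j$ near the zero section is dominating on $\Omega$, so Lemma~\ref{lem:section} still furnishes the local section that makes the pushforward locally surjective near $f$, while the large-parameter gluing over a Cartan pair (Lemma~\ref{lem:gluing}) is carried out using the individual \emph{entire} sprays $F_j$, for which domination is only ever needed on the overlap $A\cap B$ and is supplied by the summed family. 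I expect the bookkeeping of this family-version of the gluing—keeping the auxiliary $\C^{N_j}$-valued maps in the correct parameter ranges while matching on overlaps—to be the delicate part; once it produces a single dominating spray over $f$ on a neighborhood of $K$, Theorem~\ref{thm:cell1} gives that $Y$ is Oka.
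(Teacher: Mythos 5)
Your direction $(1)\Rightarrow(2)$ is fine, and you correctly isolate the crux of $(2)\Rightarrow(1)$: condition (2) only provides a family of sprays over $f$ whose differentials span \emph{jointly}, and one must manufacture from it a \emph{single} dominating spray $\Omega\times\C^N\to Y$ with entire parameter space before Theorem~\ref{thm:cell1} applies. But your proposed resolution of that crux has a genuine gap. First, ``composing the $F_j$ near the zero section'' is not a defined operation: $F_2$ is a spray over $f$, i.e.\ based at $f(x)$, whereas after flowing along $F_1$ you sit at $F_1(x,t_1)$, and there is no natural way to attach the $F_2$-directions there; this is exactly the re-basing obstruction you yourself point out, and your sketch never overcomes it. Moreover, even granting some local composition, it would only yield a spray defined on $\Omega\times P$ for a small neighborhood $P\ni 0$, which is not enough for Theorem~\ref{thm:cell1}: in its proof the map $G_B'$ produced by Oka--Weil approximation takes arbitrary values in $\C^N$ and is pushed forward by $(\pr_1,F)_*$, so $F$ must be defined on all of $\Omega\times\C^N$. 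Second, your plan to run the gluing ``using the individual entire sprays $F_j$, for which domination \dots is supplied by the summed family'' conflicts with the hypothesis of Lemma~\ref{lem:gluing}: it requires a \emph{single} spray $F_A$ dominating on $A\cap B$, and in your situation possibly no single $F_j$ dominates at any point; a gluing lemma for such joint families is not among the tools of the paper.

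The paper closes this gap by a different and simpler mechanism, which your proposal misses: iterate condition (2), each time applied to the \emph{total space of the previously built spray}. Since $\Omega\times\C^{N_1}$ is again Stein and $F_1\in\cO(\Omega\times\C^{N_1},Y)$, condition (2) applied to $F_1$ at the point $(x_0,0)$ yields a spray $F_2:(\Omega\times\C^{N_1})\times\C^{N_2}\to Y$ over $F_1$ whose differential image at $(x_0,0)$ is not contained in that of $F_1$; reading $F_2$ as a spray $\Omega\times\C^{N_1+N_2}\to Y$ over $f$, the chain rule shows that its differential image at $x_0$ is the sum of the two, and the entire parameter space is preserved automatically. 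After at most $\dim Y$ such steps one dominates at $x_0$, hence on a Zariski open neighborhood $U\subset\Omega$ of $x_0$, since failure of domination is an analytic condition; repeating the procedure at points of $K\setminus U$, finitely many further iterations give a spray over $f$ with entire parameters dominating on all of $K$, and Theorem~\ref{thm:cell1} then shows that $Y$ is Oka. Once this re-basing trick is available, your intermediate step of covering $K$ by finitely many neighborhoods carrying jointly spanning families becomes unnecessary.
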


\begin{proof}
It suffices to prove the implication $(2)\implies(1)$.
Let $K\subset\C^n$ be a compact convex set and $f\in\cO(K,Y)$ be a holomorphic map.
Then there exists a convex domain $\Omega\supset K$ such that $f\in\cO(\Omega,Y)$.
Choose arbitrary $x_0\in K$.
By assumption, there exists a holomorphic spray $F_1:\Omega\times\C^{N_1}\to Y$ over $f$ such that $\partial_t|_{t=0}F_1(x_0,t)(\T_0\C^{N_1})\neq 0$.
By assumption again, there exists a holomorphic spray $F_2:(\Omega\times\C^{N_1})\times\C^{N_2}\to Y$ over $F_1$ such that $\partial_t|_{t=0}F_2((x_0,0),t)(\T_0\C^{N_2})\not\subset\partial_t|_{t=0}F_1(x_0,t)(\T_0\C^{N_1})$.
After repeating this process finitely many times, we obtain a holomorphic spray $F_k:\Omega\times\C^{N_1+\cdots+N_k}\to Y$ over $f$ such that 
\begin{align*}
\partial_t|_{t=0}F_k(x_0,t)(\T_0\C^{N_1+\cdots+N_k})=\sum_{j=1}^k\partial_t|_{t=0}F_j((x_0,0),t)(\T_0\C^{N_j})=\T_{f(x_0)}Y.
\end{align*}
Therefore, $F_k$ is dominating on a Zariski open neighborhood $U\subset\Omega$ of $x_0$.
If we repeat the above process for $F_k$ and a point in $K\setminus U$, after finitely many times, we may obtain a holomorphic spray $\Omega\times\C^N\to Y$ over $f$ dominating on $K$.
Shrinking $\Omega\supset K$ if necessary, it defines a dominating spray $\Omega\times\C^N\to Y$ over $f$ and hence $Y$ is an Oka manifold by Theorem \ref{thm:cell1}.
\end{proof}

In the following, if we consider the space $\Gamma(X,f^*\T Y)$ of holomorphic vector fields along $f$ as the tangent space $\T_f\cO(X,Y)$ to $\cO(X,Y)$ at $f$, the condition (2) means that we can draw an entire curve in any direction at each point of $\cO(X,Y)$.

\begin{cor}
For a complex manifold $Y$, the following are equivalent:
\begin{enumerate}
\item $Y$ is Oka.
\item For any Stein manifold $X$, any $f\in\cO(X,Y)$ and any $V\in\Gamma(X,f^* \T Y)$, there exists a holomorphic spray $F:X\times\C\to Y$ over $f$ which satisfies $(\partial F(\cdot,t)/\partial t)|_{t=0}=V$.
\end{enumerate}
\end{cor}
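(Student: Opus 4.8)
The plan is to establish both implications: the forward one via Condition $\Ell_1$ (Theorem \ref{thm:ell1}) together with a lifting argument, and the reverse one by reducing to Corollary \ref{cor:weaksubell1}.

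To prove $(1)\implies(2)$, I would first invoke Theorem \ref{thm:ell1} to deduce that an Oka manifold $Y$ satisfies Condition $\Ell_1$, which yields a dominating spray $G:X\times\C^N\to Y$ over the given $f$. The fiberwise differential $L_x=\partial_t|_{t=0}G(x,\cdot):\C^N\to\T_{f(x)}Y$ then varies holomorphically in $x$ and is surjective for every $x$, so $(x,v)\mapsto L_x(v)$ is a surjective morphism of holomorphic vector bundles $X\times\C^N\to f^*\T Y$ over the Stein manifold $X$. The key step is to lift the prescribed field $V\in\Gamma(X,f^*\T Y)$ through this surjection: because $X$ is Stein, the map induced on global sections is surjective (the kernel is a coherent subsheaf and its $H^1$ vanishes by Cartan's theorem B), so there exists $w\in\cO(X,\C^N)$ with $L_x(w(x))=V(x)$ for every $x$. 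Setting $F(x,t)=G(x,tw(x))$ then gives a holomorphic map $X\times\C\to Y$ (entire, since $G$ is defined on all of $X\times\C^N$ and $w$ is entire) with $F(\cdot,0)=f$ and $(\partial F(\cdot,t)/\partial t)|_{t=0}=V$, which is the desired single-parameter spray.

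For $(2)\implies(1)$, I would reduce to Corollary \ref{cor:weaksubell1}. Fixing a Stein manifold $X$, a map $f\in\cO(X,Y)$ and a point $x_0\in X$, Cartan's theorem A guarantees that the global sections of $f^*\T Y$ generate each fiber, so I can select finitely many $V_1,\ldots,V_m\in\Gamma(X,f^*\T Y)$ whose values at $x_0$ span $\T_{f(x_0)}Y$. Applying hypothesis (2) to each $V_j$ produces sprays $F_j:X\times\C\to Y$ over $f$ with $\partial_t|_{t=0}F_j(x,t)=V_j(x)$, and hence
\begin{align*}
\sum_{j=1}^m\partial_t|_{t=0}F_j(x_0,t)(\T_0\C)=\operatorname{span}\{V_1(x_0),\ldots,V_m(x_0)\}=\T_{f(x_0)}Y.
\end{align*}
This is exactly condition (2) of Corollary \ref{cor:weaksubell1}, whence $Y$ is Oka.

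I expect the lifting step in $(1)\implies(2)$ to be the main obstacle: it is where one must realize an \emph{arbitrary} holomorphic vector field along $f$ as the first-order variation of a single entire spray, and it is precisely here that the Stein hypothesis on $X$ is essential, through the surjectivity on sections furnished by Cartan's theorem B. Once the lift $w$ is in hand, the remaining verifications are formal, and the reverse implication is then a short application of the already-established Corollary \ref{cor:weaksubell1}.
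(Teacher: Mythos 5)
Your proposal is correct, and the interesting difference lies in the implication $(1)\implies(2)$. The paper argues there by regarding $V$ as a fiberwise-tangent vector field near the Stein graph $\Gamma_f\subset X\times Y$, using its local flow to build a spray $F_0$ on a neighborhood of $X\times\{0\}\subset X\times\C$ with the prescribed derivative, and then invoking Forstneri\v{c}'s Oka principle (with jet interpolation along $X\times\{0\}$) to replace $F_0$ by a globally defined holomorphic spray $F:X\times\C\to Y$ — essentially repeating the flow construction from the proof of Proposition \ref{prop:genell1}. You instead black-box all of that into the well-known implication from the Oka property to Condition $\Ell_1$ (Proposition \ref{prop:genell1} with $U=Y$; note you only need this easy direction of Theorem \ref{thm:ell1}, so there is no circularity), obtaining a dominating spray $G:X\times\C^N\to Y$, and then solve the problem by pure linear algebra over the Stein base: the fiberwise differential gives a surjective bundle morphism $X\times\C^N\to f^*\T Y$, Cartan's Theorem B (or the splitting of the resulting exact sequence, exactly as in Lemma \ref{lem:section}) lifts $V$ to $w\in\cO(X,\C^N)$, and the reparametrization $F(x,t)=G(x,tw(x))$ is entire with $\partial_t|_{t=0}F(\cdot,t)=V$. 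Your route avoids any second application of the Oka principle or of flow techniques once the dominating spray is in hand, at the cost of using the full strength of a globally defined dominating spray rather than just a germ of one; the paper's route is the natural one if one wants to see the spray built directly from $V$. Your implication $(2)\implies(1)$ — choosing sections $V_1,\ldots,V_m$ spanning $\T_{f(x_0)}Y$ via Cartan's Theorem A and feeding the resulting one-parameter sprays into Corollary \ref{cor:weaksubell1} — is exactly the paper's argument, just written out in more detail.
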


\begin{proof}
We first prove the implication $(1)\implies(2)$.
Consider $V\in\Gamma(X,f^*\T Y)$ as a holomorphic vector field on the graph $\Gamma_f\subset X\times Y$ tangent to the fibers of $X\times Y\to X$.
Since $\Gamma_f$ is Stein, we may extend it to an open neighborhood of $\Gamma_f$.
Using its local flow, we may construct a holomorphic map $F_{0}:U\to Y$ from an open neighborhood $U\subset X\times\C$ of $X\times\{0\}$ such that $F_{0}(\cdot,0)=f$ and $(\partial F_{0}(\cdot,t)/\partial t)|_{t=0}=V$ (see the proof of Proposition \ref{prop:genell1}).
Shrinking $U\supset X\times\{0\}$ if necessary, we can extend $F_{0}$ to a continuous map $F_{0}:X\times\C\to Y$.
Then by Forstneri\v{c}'s Oka principle (cf. \cite[Theorem 5.4.4]{ForSMHM}), we may obtain a holomorphic spray 
$F:X\times\C\to Y$ over $f$ which satisfies $(\partial F(\cdot,t)/\partial t)|_{t=0}=V$.

The converse implication follows from Corollary \ref{cor:weaksubell1} because $\Gamma(X,f^*\T Y)\to\T_{f(x_0)}Y,\ V\mapsto V(x_0)$ is surjective for any Stein manifold $X$ and any $x_{0}\in X$.
\end{proof}

Next, we shall consider strong $\C$-connectedness and prove the counterpart of Corollary \ref{cor:relell1} as an application of Theorem \ref{thm:okaprinciple}.
Recall the following characterization of Oka manifolds by strong $\C$-connectedness of mapping spaces.

\begin{thm}[{\cite[Corollary 3.1]{Kus}}]\label{thm:cconn}
For a complex manifold $Y$, the following are equivalent:
\begin{enumerate}
\item $Y$ is Oka.
\item For any Stein manifold $X$ and any pair $f_0,f_1\in\cO(X,Y)$ of homotopic holomorphic maps, there exists a holomorphic map $F:X\times\C\to Y$ such that $F(\cdot,0)=f_0$ and $F(\cdot,1)=f_1$.
\item For any bounded convex domain $\Omega$ and any pair $f_0,f_1\in\cO(\Omega,Y)$ of holomorphic maps, there exists a holomorphic map $F:\Omega\times\C\to Y$ such that $F(\cdot,0)=f_0$ and $F(\cdot,1)=f_1$.
\end{enumerate}
\end{thm}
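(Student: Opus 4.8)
The plan is to establish the cycle of implications $(1)\Rightarrow(2)\Rightarrow(3)\Rightarrow(1)$. The first two are the soft directions; the substance lies in deducing the Oka property from the connectedness condition $(3)$, and there I would reuse the convex-approximation machinery developed above, with strong $\C$-connectedness taking over the role played by a global dominating spray in the proof of Theorem \ref{thm:cell1}.

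For $(1)\Rightarrow(2)$, I would exploit that $X\times\C$ is again Stein and that $X\times\{0,1\}\subset X\times\C$ is a closed complex subvariety. Given homotopic maps $f_0,f_1\in\cO(X,Y)$, a choice of homotopy lets me build a continuous map $G\colon X\times\C\to Y$ which equals $f_0$ near the slice $X\times\{0\}$ and $f_1$ near $X\times\{1\}$, hence is holomorphic in a neighborhood of the subvariety $X\times\{0,1\}$. Forstneri\v{c}'s Oka principle with interpolation (cf. \cite[Theorem 5.4.4]{ForSMHM}) then deforms $G$ to a holomorphic map $F\colon X\times\C\to Y$ agreeing with $G$ along $X\times\{0,1\}$, which is exactly the required map with $F(\cdot,0)=f_0$ and $F(\cdot,1)=f_1$. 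For $(2)\Rightarrow(3)$, I would simply note that a bounded convex domain $\Omega$ is Stein and contractible, so any two holomorphic maps $f_0,f_1\in\cO(\Omega,Y)$ are homotopic as continuous maps and $(2)$ applies verbatim.

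The heart of the argument is $(3)\Rightarrow(1)$, which I would run through Lemma \ref{lem:CAP}: fixing a compact convex $K\subset\C^n$, an affine linear $\lambda$, and $K_0=\{z\in K:\lambda(z)\le0\}$, it suffices to show that $\overline{\cO(K,Y)|_{K_0}}$ is open in the connected space $\cO(K_0,Y)$. Take $f$ in this closure. Since $K_0$ is convex it has a Stein neighborhood basis, so Lemma \ref{lem:localspray} furnishes a dominating spray $F$ over $f$ on a convex neighborhood of $K_0$ \emph{unconditionally}, and Lemma \ref{lem:section} then shows that a whole $\cO(K_0,Y)$-neighborhood of $f$ is swept out by the maps $F\circ(\id,\varphi)$. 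As in the proof of Theorem \ref{thm:cell1}, the problem reduces to proving that every such $\psi=F\circ(\id,\varphi)$ again lies in $\overline{\cO(K,Y)|_{K_0}}$. To approximate $\psi$ on $K_0$ by maps on all of $K$, I would fix $g\in\cO(K,Y)$ close to $f$ on $K_0$, form the Cartan pair $A=\{\lambda\ge\varepsilon\}\cap K$, $B=\{\lambda\le2\varepsilon\}\cap K$ with convex overlap $C=A\cap B$, and equip $A$ with a dominating spray over $g|_A$ via Lemma \ref{lem:localspray}. On $C$, where both $\psi$ and $g$ are defined for $\varepsilon$ small, condition $(3)$ supplies an entire curve $H\colon C\times\C\to Y$ joining $\psi|_C$ to $g|_C$, and I would use this curve to bridge the two sprays and apply Lemma \ref{lem:gluing} to produce a holomorphic map on $K=A\cup B$ close to $\psi$ on $K_0$.

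The main obstacle is precisely this gluing step. Unlike the situation in Theorem \ref{thm:cell1}, there is no \emph{global} dominating spray over $f$ available to transport an Oka--Weil approximation simultaneously over $C$ and $K_0$; instead the connecting curve $H$ must serve as the bridge, even though it is typically far from dominating. The delicate point is to convert the one-parameter connection $H$ between $\psi|_C$ and $g|_C$ into data to which Lemma \ref{lem:gluing} applies, that is, to carry the approximation across the Cartan pair along $H$ while keeping the spray centers close on the overlap. I expect this to require combining $H$ with the local dominating sprays over $\psi$ and over $g$, so that the endpoints of $H$ acquire genuine domination, and then performing the disjoint-set Oka--Weil approximation in the resulting parameter spaces before pushing forward. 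Once this bridging is in place, the openness of $\overline{\cO(K,Y)|_{K_0}}$, and hence the Oka property, follows.
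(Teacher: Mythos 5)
Your implications $(1)\Rightarrow(2)$ and $(2)\Rightarrow(3)$ are correct and are the standard soft directions (note, by the way, that the paper itself does not prove Theorem \ref{thm:cconn} at all: it imports it from \cite[Corollary 3.1]{Kus}, so the only in-paper model for the hard direction is the closely analogous proof of Theorem \ref{thm:cell1}). Your skeleton for $(3)\Rightarrow(1)$ --- the reduction via Lemma \ref{lem:CAP}, openness of $\overline{\cO(K,Y)|_{K_0}}$ via Lemmas \ref{lem:localspray} and \ref{lem:section}, and a Cartan pair gluing --- is also the right frame. But the step you yourself flag as ``the main obstacle'' is not a technical detail to be expected to work out: it is the entire content of the theorem, and the specific way you set it up would fail. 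Lemma \ref{lem:gluing} needs a spray $F_B$ defined on a neighborhood of \emph{all} of $B$ which is close to the $A$-side dominating spray \emph{as a spray} on the overlap and close to a spray over $\psi$ near $K_0$. Your connecting curve $H\colon C\times\C\to Y$ lives only over (a neighborhood of) the overlap $C$, so anything you build from it, e.g.\ $z\mapsto H(z,h(z))$, is again only defined near $C$ and cannot feed the gluing lemma; and your idea of making the endpoints of $H$ dominating by combining $H$ with local sprays from Lemma \ref{lem:localspray} runs into the fact that those sprays have \emph{bounded} parameter domains $P$, through which an Oka--Weil approximation (uncontrolled off the two convex compacts) cannot be pushed. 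This is precisely the point where Theorem \ref{thm:cell1} uses its hypothesis: a dominating spray with fiber all of $\C^N$.

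A correct argument exists along your general lines, but it needs two ideas absent from your sketch. First, since $g\in\cO(K,Y)$ is defined on the whole convex compact $K$, Lemma \ref{lem:localspray} gives a dominating spray $G_K\colon\Omega_K\times P\to Y$ over $g$ on a neighborhood of \emph{all} of $K$; this removes the extension problem, because $G_K$ is already defined over $B$, the strip, and the overlap. Second, hypothesis $(3)$ must be applied not to the maps $\psi$ and $g$ but to \emph{sprays}: on the bounded convex domain $\Omega_B\times P$ connect the trivial spray $(z,p)\mapsto\psi(z)$ to $G_K|_{\Omega_B\times P}$ by a holomorphic map $\cH\colon(\Omega_B\times P)\times\C\to Y$. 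Since the parameter of $\cH$ ranges over all of $\C$, you may compose it with an entire function $h(z,p)$ which is close to $1$ on a convex compact neighborhood of $C\times\overline{P}$ and close to $0$ on one of $K_0\times\overline{P}$ (Oka--Weil on two disjoint convex compacts); the map $F_B(z,p)=\cH(z,p,h(z,p))$ is then globally defined on $\Omega_B\times P$, close to $G_K$ as a spray near the overlap, and close to the trivial spray over $\psi$ near $K_0$, so Lemma \ref{lem:gluing} applies with $F_A=G_K|_{\Omega_A\times P}$. It is exactly this ``full-$\C$ parameter'' composition trick that replaces the full-$\C^N$ fiber in the proof of Theorem \ref{thm:cell1}; without it, your proposal does not close the argument.
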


The following is the counterpart of Corollary \ref{cor:relell1}.

\begin{thm}\label{thm:relcconn}
For a complex manifold $Y$ with a nonempty Zariski open Oka subset $U\subset Y$, the following are equivalent:
\begin{enumerate}
    \item $Y$ is Oka.
    \item For any Stein manifold $X$ and any $f\in\cO(X,Y)$ which is homotopic to some continuous map $X\to U$, there exists a holomorphic spray $F:X\times\C\to Y$ over $f$ with $F(\cdot,1)\in\cO(X,U)$.
    \item For any bounded convex domain $\Omega$ and any $f\in\cO(\Omega,Y)$, there exists a holomorphic spray $F:\Omega\times\C\to Y$ over $f$ with $F(\cdot,1)\in\cO(\Omega,U)$.
\end{enumerate}
\end{thm}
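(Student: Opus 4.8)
The plan is to prove the cycle $(1)\Rightarrow(2)\Rightarrow(3)\Rightarrow(1)$, taking Theorem \ref{thm:cconn} (the strong $\C$-connectedness characterization) rather than Theorem \ref{thm:cell1} as the target of the hard direction, since the condition here is phrased in terms of connecting sprays rather than dominating ones.

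For $(1)\Rightarrow(2)$, suppose $f\in\cO(X,Y)$ is homotopic to a continuous map $X\to U$. Because $U$ is Oka and $X$ is Stein, the basic Oka property lets me replace that continuous map by a holomorphic map $g\in\cO(X,U)$ in the same homotopy class, so $f$ and $g$ are homotopic as holomorphic maps into $Y$. Since $Y$ is Oka, Theorem \ref{thm:cconn}\,(2) produces a holomorphic spray $F:X\times\C\to Y$ over $f$ with $F(\cdot,1)=g\in\cO(X,U)$, which is exactly (2). The implication $(2)\Rightarrow(3)$ is immediate: a bounded convex domain $\Omega$ is Stein and contractible, so every $f\in\cO(\Omega,Y)$ is null-homotopic and hence, using $U\neq\emptyset$, homotopic to a constant map into $U$; applying (2) with $X=\Omega$ gives (3).

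The substance is $(3)\Rightarrow(1)$, which I would carry out by verifying condition (3) of Theorem \ref{thm:cconn}: given $f_0,f_1\in\cO(\Omega,Y)$ on a bounded convex domain, I must connect them by a single holomorphic map $\Omega\times\C\to Y$. First I apply hypothesis (3) to each $f_i$ to obtain holomorphic sprays $F_i:\Omega\times\C\to Y$ with $F_i(\cdot,0)=f_i$ and $g_i:=F_i(\cdot,1)\in\cO(\Omega,U)$. Since $g_0,g_1$ take values in the \emph{Oka} manifold $U$, Theorem \ref{thm:cconn}\,(3) applied to $U$ yields a holomorphic map $G:\Omega\times\C\to U$ connecting $g_0$ to $g_1$. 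Concatenating $F_0$, $G$ and $F_1$ gives a \emph{continuous} homotopy from $f_0$ to $f_1$ through $Y$, and the goal is to promote it to a holomorphic one. I would do this with the elimination-of-intersections principle: set $X=\Omega\times\C$, $X'=\Omega\times\{0,1\}$, $A=Y\setminus U$, and build from the concatenation a continuous map $H_0:\Omega\times\C\to Y$ that restricts to $f_0,f_1$ at $t=0,1$, is holomorphic near $X'$, and meets $A$ only along $X'$, i.e. $H_0^{-1}(A)\subset\Omega\times\{0,1\}$. Because $Y\setminus A=U$ is Oka and $H_0^{-1}(A)\subset X'$, Theorem \ref{thm:okaprinciple} then deforms $H_0$ to a holomorphic $H_1:\Omega\times\C\to Y$ agreeing with $H_0$ to first order along $X'$; in particular $H_1(\cdot,0)=f_0$ and $H_1(\cdot,1)=f_1$, so $f_0$ and $f_1$ are connected and $Y$ is Oka by Theorem \ref{thm:cconn}.

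I expect the one genuinely delicate step to be the construction of $H_0$ with $H_0^{-1}(A)\subset\Omega\times\{0,1\}$: the concatenated homotopy in general passes through $A$ in the interior, so I must arrange the family to enter $U$ immediately once $t$ leaves $\{0,1\}$. Near $t=0$ this amounts to choosing a holomorphic spray over $f_0$ whose $t$-derivative is transverse to $A$ along $f_0^{-1}(A)$, so that the curves $t\mapsto H_0(x,t)$ leave $A$ at once; such a transverse direction exists by a general-position argument for sections of $f_0^{*}\T Y$ over the Stein domain $\Omega$ at the smooth points of $A$, with the lower-dimensional singular locus handled by stratification (and similarly near $t=1$), while in the middle range the map already takes values in $U$ via $G$. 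The existence of the connecting sprays $F_i$ from hypothesis (3) is precisely what guarantees that this interior-$U$ homotopy lies in the correct homotopy class, so that $H_0$ exists at all; once it does, Theorem \ref{thm:okaprinciple} supplies the rest.
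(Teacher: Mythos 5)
Your implications $(1)\Rightarrow(2)$ and $(2)\Rightarrow(3)$ are correct and essentially identical to the paper's. The gap is in $(3)\Rightarrow(1)$, and it sits exactly at the step you yourself flag as delicate: the continuous map $H_0:\Omega\times\C\to Y$, holomorphic near $X'=\Omega\times\{0,1\}$, with $H_0(\cdot,j)=f_j$ and $H_0^{-1}(A)\subset\Omega\times\{0,1\}$, does not exist in general, and no transversality or general-position argument can produce it. The obstruction is not the homotopy class but a codimension count. The subvariety $A=Y\setminus U$ may have a component of codimension one (it does in all of the paper's applications, e.g.\ complements of quadrics in $\P^n$), and hypersurfaces in a manifold are locally principal. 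So suppose $A$ is a hypersurface, $f_0^{-1}(A)\neq\emptyset$ and $f_0(\Omega)\not\subset A$, so that $f_0^{-1}(A)$ is a nonempty proper analytic subset of $\Omega$. If $H_0$ were holomorphic on a neighborhood of $\Omega\times\{0\}$ with $H_0(\cdot,0)=f_0$, pick $x_0\in f_0^{-1}(A)$ and a local defining function $h$ of $A$ at $f_0(x_0)$. Then $h\circ H_0$ is holomorphic near $(x_0,0)$ and not identically zero (its restriction to $t=0$ is $h\circ f_0\not\equiv 0$), so its zero set is a pure $n$-dimensional analytic set through $(x_0,0)$, where $n=\dim\Omega$. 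This zero set cannot lie in $\Omega\times\{0\}$, since it meets $\Omega\times\{0\}$ only in $\{h\circ f_0=0\}\times\{0\}$, which has dimension at most $n-1$. Hence $H_0^{-1}(A)$ contains points with $t\neq 0$ arbitrarily close to $(x_0,0)$: making the single curve $t\mapsto H_0(x_0,t)$ transverse to $A$ merely forces the curves through nearby points $x'\notin f_0^{-1}(A)$ to cross $A$ at small nonzero times. So the hypothesis $f^{-1}(A)\subset X'$ of Theorem \ref{thm:okaprinciple} is unattainable with $X'=\Omega\times\{0,1\}$, and your route through the exact-endpoint characterization, Theorem \ref{thm:cconn}\,(3), breaks down.

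This is precisely why the paper does \emph{not} verify Theorem \ref{thm:cconn}, but instead the characterization of Oka manifolds by $\C$-connectedness of pairs of nonempty \emph{open} subsets $\cU_0,\cU_1\subset\cO(\Omega,Y)$ (\cite[Theorem 3.2]{Kus}). Exact endpoints are given up: one applies hypothesis (3) to maps $f_j\in\cU_j$, composes with a slight shrinking of $\Omega$ (openness of $\cU_j$ is what permits this) so that the closure of $\pr_{2}(F_j^{-1}(A))$ misses $1\in\C$, and reparametrizes \emph{holomorphically}, $G_j(z,t)=F_j(z,\exp(\pi it)+1)$, so that $G_j(\cdot,1)\in\cU_j$ while $G_j^{-1}(A)$ is pushed into a half-plane $\{\Im t<C_j\}$ on which $G_j$ is holomorphic. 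The merely continuous gluing of $G_0$, $G_1$ and a connecting map inside $U$ is performed in the region $\Im t\gg 0$, where the maps avoid $A$; the resulting $H$ is then holomorphic on a neighborhood of $H^{-1}(A)\cup(\Omega\times\{0,1\})$, and Theorem \ref{thm:okaprinciple} is applied with $X'$ equal to this \emph{whole} set, so the unavoidable, large preimage of $A$ is absorbed into the interpolation variety rather than excluded from it. If you want to salvage your argument, this is the change you must make: with a codimension-one $A$ the set $H^{-1}(A)$ can never be confined to $\Omega\times\{0,1\}$, and insisting on hitting $f_0,f_1$ exactly is incompatible with the shrinking step that the method requires.
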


\begin{proof}
Since $U$ is Oka, any continuous map $X\to U$ from a Stein manifold $X$ is homotopic to a holomorphic map $X\to U$ by Forstneri\v{c}'s Oka principle (cf. \cite[Theorem 5.4.4]{ForSMHM}).
Thus the implication $(1)\implies(2)$ follows from Theorem \ref{thm:cconn}, and the implication $(2)\implies(3)$ is obvious.
Let us prove the remaining implication $(3)\implies(1)$.
For the proof, we shall use the characterization of Oka manifolds by strong $\C$-connectedness of pairs of nonempty open subsets in mapping spaces \cite[Theorem 3.2]{Kus}.
Let $\Omega$ be a bounded convex domain and $\cU_0,\cU_1\subset\cO(\Omega,Y)$ be a pair of nonempty open subsets.
By assumption, for each $j=0,1$, there exists a holomorphic map $F_j:\Omega\times\C\to Y$ such that $F_j(\cdot,0)\in\cU_j$ and $F_j(\cdot,1)\in\cO(\Omega,U)$.
Let $\pr_{2}:\Omega\times\C\to\C$ denote the second projection and set $A=Y\setminus U$.
Composing with a shrinking map $\Omega\to\Omega$ if necessary, we may assume that the closure of $\pr_{2}(F_j^{-1}(A))$ does not contain $1\in\C$ for each $j=0,1$.
Consider $G_j:\Omega\times\C\to Y,\ (z,t)\mapsto F_j(z,\exp(\pi it)+1)$.
Then $G_j(\cdot,1)\in\cU_j$ and there exists a large number $C_j>0$ such that $\pr_{2}(G_j^{-1}(A))\subset\{t\in\C:\Im t<C_j\}$ for each $j=0,1$, where $\Im t$ is the imaginary part of $t$.
Using these maps, we may construct a continuous map $H:\Omega\times\C\to Y$ which is holomorphic on an open neighborhood of $H^{-1}(A)\cup (\Omega\times\{0,1\})$, such that $H(\cdot,j)\in\cU_j,\ j=0,1$.
By Theorem \ref{thm:okaprinciple}, we obtain a holomorphic map $\widetilde H:\Omega\times\C\to Y$ such that $\widetilde H(\cdot,j)\in\cU_j,\ j=0,1$.
This implies that $Y$ is an Oka manifold by \cite[Theorem 3.2]{Kus}.
\end{proof}

\begin{rem}
Let $Y$ be a complex manifold and $A\subsetneq Y$ be a proper closed complex submanifold.
Assume that the complement $Y\setminus A$ is Oka.
Then for any $f\in\cO(\C^n,Y)$ there exists a holomorphic spray $F:\C^n\times\C\to Y$ over $f$ with $F(\cdot,1)\in\cO(\C^n,Y\setminus A)$.
Indeed, if $f^{-1}(A)\neq\C^n$ then $F(z,t)=f((1-t)z+tz_0)$ for $z_0\in\C^n\setminus f^{-1}(A)$ has the desired property.
In the case of $f^{-1}(A)=\C^n$, using the technique in the proof of \cite[Theorem 2]{ForLar}, we may construct a holomorphic spray $F:\C^n\times\C\to Y$ over $f$ such that $F(\cdot,t)\in\cO(\C^n,Y\setminus A)$ for all $t\in\C^*$. 
At present, we do not know whether this condition implies that $Y$ is Oka.
\end{rem}

\subsection{Affirmative answers to Gromov's conjectures}

As we mentioned in the introduction, Theorem \ref{thm:ell1} also gives affirmative answers to Gromov's conjectures in \cite[\S 1.4.E$''$]{Gro}.
These conjectures are the same thing as Exercises (d), (e) and (e$'$) in \cite[p.\,72]{GroPDR}.
We omit Exercise (e) because it is a special case of Exercise (e$'$).

\begin{conj}[{Gromov \cite[\S 1.4.E$''$]{Gro}}]\label{conj:gromov}
For a (connected) complex manifold $Y$ satisfying Condition $\Ell_1$, the following hold:
\begin{enumerate}
\item (Exercise (d) in \cite[p.\,72]{GroPDR}) Let $\Omega$ be a Runge domain in a complex manifold $X$.
Let $f_0\in\cO(X,Y)$ and $f_1\in\cO(\Omega,Y)$ be holomorphic maps.
Assume that $f_1$ is homotopic to $f_0|_\Omega$ through holomorphic maps.
Then $f_1$ can be approximated on $\Omega$ by holomorphic maps $X\to Y$.
\item (Exercise (e$'$) in \cite[p.\,72]{GroPDR}) For any Stein manifold $X$, any $f_0\in\cO(X,Y)$, any (closed) discrete subset $D\subset X$ and any map $\varphi:D\to Y$, there exists $f_1\in\cO(X,Y)$ which is homotopic to $f_0$ and satisfies $f_1|_D=\varphi$.
\end{enumerate}
\end{conj}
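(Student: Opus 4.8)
The plan is to deduce both statements directly from the equivalence established in Theorem~\ref{thm:ell1}: since $Y$ satisfies Condition $\Ell_1$, it is an Oka manifold, and hence it satisfies all of the standard forms of the Oka principle, in particular the parametric Oka property with approximation and with interpolation (cf. \cite[Theorem~5.4.4]{ForSMHM}). Both conjectures then reduce to first constructing an appropriate continuous map on $X$ and then upgrading it to a holomorphic one by the relevant Oka principle, the only real work being to arrange the topological (homotopy) data correctly. I treat the two parts separately.

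For part (1), I would first reduce the assertion of approximation on $\Omega$ to approximation on a single compact set: it suffices to fix a compact $L\subset\Omega$ and produce holomorphic maps $X\to Y$ close to $f_1$ on $L$. Since $\Omega$ is Runge in $X$, the $\cO(X)$-convex hull of $L$ is a compact subset $L'\subset\Omega$, and I may assume $L\subset L'$ with $L'$ holomorphically convex in $X$. Using the holomorphic homotopy $H$ from $f_0|_\Omega$ to $f_1$ together with a cutoff function $\chi\colon X\to[0,1]$ equal to $1$ near $L'$ and supported in a compact subset of $\Omega$, I would glue to obtain a continuous map $g\colon X\to Y$ given by $g=H(\cdot,\chi(\cdot))$ on $\Omega$ and $g=f_0$ on $X\setminus\Omega$. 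By construction $g$ agrees with $f_1$ near $L'$, is holomorphic there, and is homotopic to $f_0$. The \emph{basic Oka property with approximation}, valid because $Y$ is Oka, then deforms $g$ to a holomorphic map $X\to Y$ approximating $f_1$ on $L'$, hence on $L$, which is the desired conclusion.

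For part (2), since $Y$ is connected and $D\subset X$ is closed and discrete, I would first build a continuous map $g\colon X\to Y$ homotopic to $f_0$ with $g|_D=\varphi$ by connecting $f_0(d)$ to $\varphi(d)$ along a path in $Y$ and carrying out the resulting deformation inside pairwise disjoint small neighborhoods of the points of $D$, keeping $g=f_0$ away from these neighborhoods. I would then invoke the \emph{basic Oka property with interpolation} along the subvariety $D$, again available since $Y$ is Oka, to deform $g$ while fixing its values on $D$ into a holomorphic map $f_1\in\cO(X,Y)$ with $f_1|_D=\varphi$. Concatenating the two homotopies shows that $f_1$ is homotopic to $f_0$, as required.

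The substantive input in both parts is Theorem~\ref{thm:ell1}, which converts the hypothesis $\Ell_1$ into the Oka property; once this is in hand, the point requiring care is purely homotopy-theoretic, namely the correct construction of the continuous map $g$ so that its homotopy class matches $f_0$ and so that it is holomorphic on the prescribed set. For part (1) the Runge hypothesis is exactly what lets me pass from approximation on compact subsets of $\Omega$ to the stated approximation on $\Omega$, while for part (2) the connectedness of $Y$ is what guarantees that the initial continuous interpolant exists. I do not expect genuine analytic difficulty beyond correctly citing the appropriate parametric version of Forstneri\v{c}'s Oka principle for the Oka manifold $Y$.
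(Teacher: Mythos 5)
Your proposal is correct and follows essentially the same route as the paper: both parts invoke Theorem~\ref{thm:ell1} to conclude that $Y$ is Oka, then construct a continuous map that is holomorphic where needed (near a compact $\cO(X)$-convex subset of $\Omega$ in part~(1), near the discrete set $D$ in part~(2)) and apply Forstneri\v{c}'s Oka principle with approximation, respectively interpolation. Note that, like the paper's own argument, your proof really establishes the conjecture only under the Steinness assumptions on $X$ and $\Omega$ (compactness of the $\cO(X)$-hull and the applicability of the Oka principle both require this), which is exactly the scope of the paper's corollary.
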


Under the natural Steinness assumption, we may solve the above exercises.

\begin{cor}
Conjecture \ref{conj:gromov} is true for Stein manifolds $X$ and $\Omega$.
\end{cor}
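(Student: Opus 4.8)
The plan is to invoke Theorem \ref{thm:ell1} to replace Condition $\Ell_1$ by the Oka property, and then deduce each exercise from the appropriate form of Forstneri\v{c}'s Oka principle (cf. \cite[Theorem 5.4.4]{ForSMHM})---the version with approximation for~(1) and the version with interpolation for~(2). Since $Y$ satisfies $\Ell_1$, Theorem \ref{thm:ell1} shows that $Y$ is Oka, so every continuous map from a Stein manifold into $Y$ that is holomorphic near a prescribed $\cO(X)$-convex compact set (resp.\ near a closed complex subvariety) can be deformed, within its homotopy class, to a holomorphic map approximating it there (resp.\ agreeing with it there).

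For~(1) I would first reduce approximation on $\Omega$ to approximation on an arbitrary compact set $L\subset\Omega$. Because $\Omega$ is Stein, the $\cO(\Omega)$-convex hull $K=\widehat{L}$ is compact, and because $\Omega$ is Runge in $X$, $K$ is moreover $\cO(X)$-convex. Writing the given holomorphic homotopy as a continuous map $h:\Omega\times[0,1]\to Y$ with $h(\cdot,0)=f_0|_\Omega$ and $h(\cdot,1)=f_1$, I would choose relatively compact open sets $K\subset\Omega'\Subset\Omega''\Subset\Omega$ and a continuous cutoff $\chi:X\to[0,1]$ supported in $\Omega''$ with $\chi\equiv 1$ on $\Omega'$, and set
\begin{align*}
F(z)=\begin{cases} h(z,\chi(z)) & z\in\Omega,\\ f_0(z) & z\in X\setminus\overline{\Omega''}.\end{cases}
\end{align*}
On the overlap $\Omega\setminus\overline{\Omega''}$ one has $\chi\equiv 0$, so $h(z,0)=f_0(z)$ and $F$ is a well-defined continuous map $X\to Y$; it equals $f_1$ on $\Omega'\supset K$ (hence is holomorphic near $K$) and is homotopic to $f_0$ via $z\mapsto h(z,s\chi(z))$. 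Applying the Oka property with approximation to $F$ and the $\cO(X)$-convex set $K$ then yields a holomorphic map $X\to Y$ approximating $F=f_1$ on $L$, as required.

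For~(2) I note that the closed discrete set $D\subset X$ is a (zero-dimensional) closed complex subvariety, so the assignment $\varphi:D\to Y$ is automatically holomorphic. I would first build a continuous map $F:X\to Y$, homotopic to $f_0$ and holomorphic near $D$, with $F|_D=\varphi$: using that $Y$ is connected, choose for each $d\in D$ a path in $Y$ from $f_0(d)$ to $\varphi(d)$, pick pairwise disjoint coordinate balls $B_d\ni d$ (possible since $D$ is closed and discrete), and deform $f_0$ inside the $B_d$ along these paths so that the resulting $F$ is constant equal to $\varphi(d)$ near each $d$ and equals $f_0$ off $\bigcup_d B_d$. Applying the Oka property with interpolation along the subvariety $D$ then produces $f_1\in\cO(X,Y)$ homotopic to $F$ (hence to $f_0$) with $f_1|_D=F|_D=\varphi$.

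The two cutoff constructions are routine; the only genuinely delicate point is in~(1), where the Runge hypothesis is exactly what guarantees that the hull $K=\widehat{L}$ formed inside $\Omega$ remains $\cO(X)$-convex, so that the approximation theorem on the ambient manifold $X$ applies. In~(2) the analogous potential issue---that prescribing the values $\varphi$ might conflict with staying in the homotopy class of $f_0$---does not arise, since the connectedness of $Y$ together with the discreteness of $D$ removes any homotopy-theoretic obstruction.
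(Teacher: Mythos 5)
Your proposal is correct and follows essentially the same route as the paper: apply Theorem \ref{thm:ell1} to conclude that $Y$ is Oka, then construct a continuous map holomorphic near an $\cO(X)$-convex compact set (for (1), via a cutoff applied to the holomorphic homotopy and the Runge property) or near the discrete set $D$ (for (2), via local deformations along paths), and finish with Forstneri\v{c}'s Oka principle with approximation, respectively interpolation. The only difference is that you spell out the cutoff and hull arguments that the paper leaves implicit.
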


\begin{proof}
We may assume that a complex manifold $Y$ is Oka by Theorem \ref{thm:ell1}.
Then both are well-known properties but we give their proofs for reader's convenience.
\\
(1) Since $X$ and $\Omega$ is Stein, the Runge domain $\Omega\subset X$ is exhausted by compact $\cO(X)$-convex subsets (cf. \cite[Theorem 2.3.3]{ForSMHM}).
Take a sufficiently large compact $\cO(X)$-convex subset $K\subset\Omega$.
By using a homotopy between $f_1$ and $f_0|_\Omega$, we may construct a continuous map $X\to Y$ which agrees with $f_1$ on an open neighborhood of $K\subset X$.
Then Forstneri\v{c}'s Oka principle (cf. \cite[Theorem 5.4.4]{ForSMHM}) implies that there exists a holomorphic map $X\to Y$ which approximates $f_1$ on $K$.
\\
(2) By deforming $f_0$ near each point of $D$, we may construct a continuous map $f_1':X\to Y$ which is homotopic to $f_0$ and satisfies $f_1'|_D=\varphi$.
Then Forstneri\v{c}'s Oka principle implies that there exists a holomorphic map $f_1:X\to Y$ which is homotopic to $f_1'$ and satisfies $f_1|_D=\varphi$.
Note that $f_1$ is also homotopic to $f_0$.
\end{proof}

Recall that, in \cite{GroPDR} and \cite{Gro}, Gromov also introduced the conditions $\Ell_2$ and $\Ell_\infty$ for complex manifolds.
Despite their names, $\Ell_2$ and $\Ell_\infty$ are not ellipticity conditions but Oka properties.
In modern terms, Gromov's conditions $\Ell_2$ and $\Ell_\infty$ are BOPJI and POPAJI for polyhedral parameters, respectively (for the definitions of these terms, see \cite[\S 5.15]{ForSMHM}).
As he mentioned in \cite[p.\,73]{GroPDR}, it has also been unknown whether $\Ell_1$ implies $\Ell_2$.
Now we know that $\Ell_2$, $\Ell_\infty$ and being Oka are equivalent (cf. \cite[Proposition 5.15.1]{ForSMHM}).
Combining this fact with Theorem \ref{thm:ell1}, we may obtain the following equivalences between Gromov's $\Ell$ conditions.

\begin{cor}
Gromov's conditions $\Ell_1$, $\Ell_2$ and $\Ell_\infty$ are equivalent.
\end{cor}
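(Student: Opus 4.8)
The plan is to observe that each of the three conditions is, on its own, equivalent to being an Oka manifold, so that they are all equivalent to one another. The genuine content has already been established in the preceding sections; what remains is purely the assembly of known equivalences.

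First I would invoke Theorem \ref{thm:ell1}, which asserts that a complex manifold satisfies Condition $\Ell_1$ if and only if it is Oka. This disposes of the $\Ell_1$ side. Next I would recall the identification, stated in the discussion just above, of Gromov's conditions $\Ell_2$ and $\Ell_\infty$ with the parametric Oka properties BOPJI and POPAJI for polyhedral parameters. By \cite[Proposition 5.15.1]{ForSMHM}, these parametric Oka properties are each equivalent to being an Oka manifold, so both $\Ell_2$ and $\Ell_\infty$ are equivalent to the Oka property.

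Combining these two facts, all three of $\Ell_1$, $\Ell_2$ and $\Ell_\infty$ are equivalent to being Oka, and hence mutually equivalent; in particular the previously open implication ``$\Ell_1$ implies $\Ell_2$'' noted in \cite[p.\,73]{GroPDR} follows at once.

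I do not expect any real obstacle here: the difficulty lay entirely in proving Theorem \ref{thm:ell1}, and once that result and the cited proposition are in hand the corollary is immediate. The only point deserving care is the dictionary between Gromov's numbering $\Ell_2,\Ell_\infty$ and the modern BOPJI/POPAJI terminology, together with the restriction to \emph{polyhedral} parameters; but the preceding discussion already supplies this correspondence, so no further verification is needed.
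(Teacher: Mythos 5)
Your proposal is correct and matches the paper's own argument exactly: the paper derives the corollary by combining Theorem \ref{thm:ell1} ($\Ell_1$ is equivalent to the Oka property) with the identification of $\Ell_2$ and $\Ell_\infty$ as BOPJI and POPAJI for polyhedral parameters, which are equivalent to being Oka by \cite[Proposition 5.15.1]{ForSMHM}. No gaps; nothing further is needed.
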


\subsection{New examples of Oka manifolds}

We give here some new examples of Oka manifolds by using the localization principle (Theorem \ref{thm:localization}).

Since $\P^n$ is locally isomorphic to $\C^n$ which is flexible, the complement of an algebraic subvariety of codimension at least two in $\P^n$ is Oka (cf. \cite[Theorem 0.1]{FleKalZai}).
In contrast to this case, the complement of a hypersurface in $\P^n$ is only rarely Oka (recall the logarithmic Kobayashi conjecture, see \cite{BroDen} for a recent result).
Hanysz \cite{Han} determined when complements of unions of hyperplanes are Oka.
It is a well-known problem whether the complement of every smooth cubic curve in $\P^2$ is Oka.
Related to these, we prove the Oka property for the following complements.

\begin{cor}
{\rm (1)} The complement of any quadric hypersurface in $\P^n$ is Oka.
\\
{\rm (2)} The complement of any singular irreducible cubic curve in $\P^2$ is Oka.
\end{cor}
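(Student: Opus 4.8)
The plan is to deduce both statements from the localization principle (Theorem \ref{thm:localization}): in each case I will produce, for every point of the complement, a Zariski open Oka neighborhood obtained by deleting a suitable hyperplane (resp. line) in addition to the given hypersurface.

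For (1), let $Q\subset\P^n$ be the quadric and fix $p\notin Q$. First I would choose a smooth point $q_0\in Q$ whose embedded tangent hyperplane $H=\T_{q_0}Q$ avoids $p$; this is possible because the $q_0$ with $p\in\T_{q_0}Q$ form a proper (polar) subvariety of $Q$. In the affine chart $\C^n=\P^n\setminus H$ the tangency forces the leading quadratic form of $Q\cap\C^n$ to be degenerate, so $Q\cap\C^n$ is a paraboloid $\{x_n=x_1^2+\dots+x_{n-1}^2\}$ (times free directions when $Q$ is singular). An affine shear straightens this to a hyperplane, whence $\P^n\setminus(Q\cup H)\cong\C^{n-1}\times\C^*$, which is Oka. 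These sets are Zariski open in $\P^n\setminus Q$ and cover it, so Theorem \ref{thm:localization} applies. (The rank one case is the double hyperplane, whose complement is simply $\C^n$.)

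For (2) I would run the same scheme with $Y=\P^2\setminus C$. For the cuspidal cubic, delete the cuspidal tangent $L$, which meets $C$ only at the cusp; in $\C^2=\P^2\setminus L$ the affine curve $C\setminus L$ becomes the graph $\{w=u^3\}$, so $U:=\P^2\setminus(C\cup L)\cong\C\times\C^*$ is a Zariski open Oka subset of $Y$ whose complement $Y\setminus U=L\cap Y$ is a single affine line $\cong\C$. For the nodal cubic the same move with a tangent line at the node makes $C\setminus L$ a smooth rational curve $\cong\C^*$; straightening it on the chart where it is a graph yields a Zariski open $U\cong(\C^*)^2$, again with $Y\setminus U$ a line $\cong\C$.

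The main obstacle is that this leftover line cannot be absorbed into any larger line complement: deleting a further curve meeting it is useless, while deleting a curve disjoint from it produces hyperbolic fibers $\C\setminus\{\textrm{2 points}\}$, so the two open set form of the localization, and indeed any complete vector field on $Y$, fails to move transversally off the line. I would instead invoke the relative refinements proved above: with $U$ the Zariski open Oka subset and $Y\setminus U\cong\C$, apply Corollary \ref{cor:relell1} (or Theorem \ref{thm:relcconn}) and construct the required sprays dominating along $f^{-1}(Y\setminus U)$ directly. Because the deleted curve avoids the leftover line, such sprays can be built from Picard type entire deformations whose defining coordinate omits the forbidden value (for instance keeping the product of the two affine coordinates equal to $1+e^{g(z,t)}$), the only delicate point being consistency exactly over the line, where the equation of $C$ attains the boundary value. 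Verifying this single spray condition is the crux; everything else is the localization bookkeeping above. Alternatively one recognizes $\C^2\setminus(C\setminus L)$ as a blow up chart of $\C\times\C^*$ and appeals to the blow up results behind Corollary \ref{cor:toric}.
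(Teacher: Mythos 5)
Your argument for (1) is correct and is essentially the paper's proof in geometric language: deleting a tangent hyperplane $H$ of $Q$ avoiding the given point puts $Q$ in graph form over the chart $\P^n\setminus H$, so that $\P^n\setminus(Q\cup H)\cong\C^{n-1}\times\C^*$ is Oka and Theorem \ref{thm:localization} applies; the paper does exactly this via the coordinates $w_0=z_0+iz_k$, $w_k=z_0-iz_k$.

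For (2) there is a genuine gap, and the ``main obstacle'' you describe does not exist. The missing idea is Hanysz's theorem \cite[Theorem 4.6]{Han}, which is what the paper invokes: for a rational function $f$ of one variable, the complement $(\C\times\C)\setminus\Gamma_f$ of (the closure of) its graph is Oka. Zariski open Oka neighborhoods need not be complements of lines, and rational-graph complements cover the leftover line, so Theorem \ref{thm:localization} applies directly with no relative refinement. Concretely, for the cuspidal cubic $C=\{y^2z=x^3\}$ your chart is the paper's first chart $\P^2\setminus(C\cup\{y=0\})\cong\C^2\setminus\{w=u^3\}$, and the points $[1:0:\ast]$ you could not reach all lie in the second chart $\P^2\setminus(C\cup\{x=0\})\cong\C^2\setminus\{y^2z=1\}$, the complement of the graph of $1/y^2$; for the nodal cubic $\{y^2z=x^3+x^2z\}$ the charts $\{y\neq 0\}$ and $\{x\neq 0\}$ exhibit $C$ as the closures of the graphs of $x^3/(1-x^2)$ and $1/(y^2-1)$, and again these two charts cover $\P^2\setminus C$. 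Note that the second chart is precisely of the kind you dismissed: the deleted curve $\{x=0\}\cap Y$ is disjoint from the leftover line, and $\C^2\setminus\{y^2z=1\}$ does fiber over the $z$-coordinate with twice-punctured planes as generic fibers, yet it is Oka. Hyperbolicity of the fibers of some fibration does not obstruct the Oka property of the total space, so your heuristic is unsound.

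Beyond this, two steps of your text fail as written. First, in the nodal case the claim $U=\P^2\setminus(C\cup L)\cong(\C^*)^2$ is false: $C\cup L$ has two irreducible components of coprime degrees $3$ and $1$, so $H_1(U;\Z)\cong\Z^2/\langle(3,1)\rangle\cong\Z$, whereas $H_1((\C^*)^2;\Z)\cong\Z^2$. (With $L$ a nodal tangent one computes $U\cong\C^2\setminus\{8vz=(v-1)^3\}$, once more a rational-graph complement; so without Hanysz's theorem even the Oka property of your single chart is unproven in the nodal case.) Second, the fallback via Corollary \ref{cor:relell1} or Theorem \ref{thm:relcconn} is a plan, not a proof: one must produce, for every bounded convex domain $\Omega$ and every $f\in\cO(\Omega,Y)$, a spray over $f$ dominating on $f^{-1}(Y\setminus U)$, and you explicitly defer exactly this verification (``the crux''), while the proposed deformation keeping the product of the affine coordinates equal to $1+e^{g(z,t)}$ is neither shown to be well defined as a map into $Y$ nor to be dominating along that set. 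The closing alternative, identifying $\C^2\setminus(C\setminus L)$ as a ``blow up chart of $\C\times\C^*$'', is likewise unsubstantiated. As it stands, part (2) is not proved.
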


\begin{proof}
(1) After a change of coordinates, we may assume that a given hypersurface is $Q=\{z_0^2+\cdots+z_k^2=0\}\subset\P^n,\ 0\leq k\leq n$.
If $k=0$, the complement $\P^n\setminus Q\cong\C^n$ is clearly Oka.
Let us consider the case of $k\geq 1$.
Take arbitrary point $a=[a_0,\ldots,a_n]\in\P^n\setminus Q$.
Since $a_j\neq 0$ for some $0\leq j\leq k$, we may assume that $a_0\neq 0$ and $a_0+ia_k\neq 0$ by a change of coordinates.
If we set $w_0=z_0+iz_k$ and $w_k=z_0-iz_k$, the equation of $Q$ becomes $z_0^2+\cdots+z_k^2=w_0w_k+z_1^2+\cdots+z_{k-1}^2$.
Thus, the point $a\in\P^n\setminus Q$ has a Zariski open neighborhood which is isomorphic to the complement $(\C^{n-1}\times\C)\setminus\Gamma_f$ of the graph of $f(x_1,\ldots,x_{n-1})=-(x_1^2+\cdots+x_{k-1}^2)$.
Since the complement $(\C^{n-1}\times\C)\setminus\Gamma_f\cong\C^{n-1}\times\C^*$ is Oka, the complement $\P^n\setminus Q$ is also Oka by the localization principle. 
\\
(2) After a change of coordinates, a given curve $C$ can be assumed to be $\{y^2z=x^3\}$ or $\{y^2z=x^3+x^2z\}$.
In both cases, the complement $\P^2\setminus C$ is Zariski locally isomorphic to the complement $(\C\times\C)\setminus\Gamma_f$ of the graph of a rational function $f$ in one variable.
By the result of Hanysz \cite[Theorem 4.6]{Han} the complement $(\C\times\C)\setminus\Gamma_f$ is Oka, hence the conclusion follows from the localization principle.
\end{proof}

Next, we shall prove the Oka property for some blowups of $\C^n$.
As we shall see in the appendix, the blowup of $\C^n$ is in general not Oka (Example \ref{ex:blowup}).
Let us first recall the following result of Forstneri\v{c}.

\begin{prop}[{\cite[Proposition 6.4.12]{ForSMHM}}]\label{prop:tame}
Let $n>1$ and $D\subset\C^n$ be a discrete subset.
Assume that $D\subset\C^n$ is tame, i.e. there exists an automorphism $\varphi\in\Aut\C^n$ such that $\varphi(D)=\N\times\{0\}\subset\C^n$.
Then the blowup $\Bl_D\C^n$ is Oka.
\end{prop}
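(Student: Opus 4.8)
The plan is to deduce the statement from the localization principle (Theorem \ref{thm:localization}). First, since the given tame set $D$ and $\N\times\{0\}$ correspond under the automorphism $\varphi\in\Aut\C^n$, the induced isomorphism of blowups $\Bl_D\C^n\cong\Bl_{\varphi(D)}\C^n$ lets us assume $D=\N\times\{0\}$. Writing $\pi:\Bl_D\C^n\to\C^n$ for the blowup map, I would then produce a Zariski open Oka neighborhood of every point of $\Bl_D\C^n$ and apply Theorem \ref{thm:localization}.

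A closed discrete subset of $\C^n$ is a $0$-dimensional closed complex subvariety, and so is $D\setminus\{p\}$ for any $p\in D$; hence $\pi^{-1}(D)=\bigsqcup_{p\in D}E_p$ and each $\pi^{-1}(D\setminus\{p\})$ are closed complex subvarieties of $\Bl_D\C^n$. For a point lying over $\C^n\setminus D$ I would take the Zariski open set $\Bl_D\C^n\setminus\pi^{-1}(D)$, which is isomorphic to $\C^n\setminus D$ and hence Oka because $D$ is tame (\cite[Proposition 5.6.17]{ForSMHM}).

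For a point on the exceptional divisor $E_p$ over $p\in D$, I would use the Zariski open set
\[
U_p:=\Bl_D\C^n\setminus\pi^{-1}(D\setminus\{p\})\cong\Bl_{\{p\}}\bigl(\C^n\setminus(D\setminus\{p\})\bigr),
\]
the blowup of $\C^n$ with a tame discrete set removed at the single remaining center $p$. Since $U_p$ contains $E_p$, the whole problem reduces to showing that this one-point blowup is Oka.

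The main obstacle is exactly this last step, namely dominating in the direction normal to $E_p$. Vector fields lifted from $\C^n$ cannot achieve it: a holomorphic field lifts to $\Bl_{\{p\}}\C^n$ only if it vanishes at $p$, in which case its lift is tangent to $E_p$, and since $H^0(\P^{n-1},\cO(-1))=0$ there is no global vertical field along $E_p$ at all, so flows alone never dominate normally. One must instead build genuine sprays transverse to $E_p$ from the holomorphic flexibility (density property) of $\C^n\setminus(D\setminus\{p\})$ together with the ellipticity of the model $\Bl_0\C^n$, as in Forstneri\v{c}'s original argument for \cite[Proposition 6.4.12]{ForSMHM}; granting that $U_p$ is Oka, Theorem \ref{thm:localization} finishes the proof.
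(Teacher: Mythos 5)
Your reduction via Theorem \ref{thm:localization} is sound as far as it goes: the isomorphism $\Bl_D\C^n\cong\Bl_{\varphi(D)}\C^n$, the Zariski openness of your two kinds of sets, and the identifications $\Bl_D\C^n\setminus\pi^{-1}(D)\cong\C^n\setminus D$ and $U_p\cong\Bl_{\{p\}}(\C^n\setminus(D\setminus\{p\}))$ are all correct. But the proof stops exactly where the content of the proposition begins. The Oka property of $U_p$ is never established; it is ``granted'' by appealing to ``Forstneri\v{c}'s original argument for \cite[Proposition 6.4.12]{ForSMHM}'', which is the very statement being proven, so the argument as written is circular. Your vector-field discussion is correct (every holomorphic vector field on the blowup is tangent to $E_p$ along $E_p$, since the normal bundle is $\cO(-1)$ and $H^0(\P^{n-1},\cO(-1))=0$), but it is purely negative: it explains why flow-generated sprays cannot dominate transversally to $E_p$ and produces nothing in their place. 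In effect, localization has only moved the difficulty from $\Bl_D\C^n$ to $U_p$, which still contains the exceptional divisor where all the difficulty lives; points of $E_p$ inside $U_p$ still need Zariski open Oka neighborhoods.

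For comparison, the paper does not reprove this proposition either --- it quotes it from \cite{ForSMHM} --- but the corollary following it generalizes it, and that proof, specialized to $\dim A=0$, is a complete version of what you are attempting. Since $D$ is tame, assume $D=\N\times\{0\}\subset\C\times\C^{n-1}$ and choose $f\in\cO(\C)$ with simple zeros exactly on $\N$. Then
\[
\Bl_D\C^n=\{(z,z',w)\in\C\times\C^{n-1}\times\P^{n-1}:w=[f(z),z'_1,\ldots,z'_{n-1}]\},
\]
which is covered by the Zariski open set $\{z'=f(z)w\}\cong\C^n$ together with $n-1$ Zariski open sets isomorphic to $\{(z,u,v)\in\C^3:f(z)=uv\}\times\C^{n-2}$; the hypersurface $f(z)=uv$ is Oka by Kaliman--Kutzschebauch \cite[Theorem 2]{KalKut}, these charts do cover the exceptional divisors, and Theorem \ref{thm:localization} applies. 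Alternatively, your own covering can be completed without that corollary: in the standard charts of $\Bl_{\{p\}}\C^n$, the set $U_p$ is covered by $n-1$ Zariski open sets biholomorphic to $\C^n$ and one biholomorphic to $\C^n$ minus a discrete subset of a complex line; such a subset is tame (cf. \cite{RosRud}), so this last chart is Oka by \cite[Proposition 5.6.17]{ForSMHM}, and a second application of Theorem \ref{thm:localization} yields the Oka property of $U_p$. Either supplement constitutes the actual substance of the proof; without one of them, the proposal establishes nothing beyond a reformulation of the statement.
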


There is also the notion of tameness for higher dimensional subvarieties (cf. \cite[Definition 4.11.3]{ForSMHM}).
It seems that the above also holds for any tame submanifold $A\subset\C^n$ of codimension at least two.
Here we give the following generalization of Proposition \ref{prop:tame} under the stronger tameness assumption.
Note that in the case of $\dim A=0$ it is nothing but Proposition \ref{prop:tame} (cf. \cite[Corollary 4.6.3\,(a)]{ForSMHM}).

\begin{cor}
Let $A\subset\C^n$ be a closed complex submanifold of pure dimension $k\leq n-2$.
Assume that there exists an automorphism $\varphi\in\Aut\C^n$ such that $\varphi(A)\subset\C^{k+1}\times\{0\}\subset\C^n$.
Then the blowup $\Bl_A\C^n$ is Oka.
\end{cor}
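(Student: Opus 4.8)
The plan is to reduce, via the localization principle (Theorem~\ref{thm:localization}), to an explicit hypersurface and then to verify the Oka property there through the relative criterion of Corollary~\ref{cor:relell1}. First, since any $\varphi\in\Aut\C^n$ lifts to a biholomorphism $\Bl_A\C^n\cong\Bl_{\varphi(A)}\C^n$, I may assume $A\subset\C^{k+1}\times\{0\}$. Writing $z=(x,y)$ with $x=(z_1,\dots,z_{k+1})$ and $y=(z_{k+2},\dots,z_n)$ (note $n-k-1\ge 1$), the submanifold $A$ is a smooth hypersurface $A'=\{g=0\}\subset\C^{k+1}$ lying in $\{y=0\}$, where $g\in\cO(\C^{k+1})$ is reduced (such $g$ exists as $\C^{k+1}$ is Stein and contractible). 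Its ideal is generated by the regular sequence $g,z_{k+2},\dots,z_n$, so $\Bl_A\C^n=\{(z,[\xi_0:\dots:\xi_{n-k-1}]):\xi_i\,g=\xi_0\,z_{k+1+i},\ \xi_i\,z_{k+1+j}=\xi_j\,z_{k+1+i}\}\subset\C^n\times\P^{n-k-1}$. The standard charts $\{\xi_m\neq 0\}$ are Zariski open and cover $\Bl_A\C^n$. In the chart $\xi_0\neq 0$ one solves $z_{k+1+i}=t_i\,g(x)$ and gets an isomorphism onto $\C^{k+1}_x\times\C^{n-k-1}_t=\C^n$, which is Oka. In a chart $\xi_m\neq 0$ with $m\ge 1$ one solves for the remaining coordinates and is left with the single relation $g(x)=ab$, where $a=\xi_0/\xi_m$ and $b=z_{k+1+m}$, together with $n-k-2$ free coordinates; thus this chart is isomorphic to $\{ab=g(x)\}\times\C^{n-k-2}$. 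By Theorem~\ref{thm:localization} it therefore suffices to prove that $W:=\{(x,a,b)\in\C^{k+1}\times\C^2:ab=g(x)\}$ is Oka.

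To treat $W$ I would apply Corollary~\ref{cor:relell1} relative to $U=\{a\neq 0\}\cup\{b\neq 0\}$. Each of $\{a\neq 0\}$ and $\{b\neq 0\}$ is isomorphic to $\C^{k+1}\times\C^*$, hence Oka, so $U$ is a Zariski open Oka subset by Theorem~\ref{thm:localization}; its complement is the smooth subvariety $A''=A'\times\{(0,0)\}$. By Corollary~\ref{cor:relell1} it then remains to produce, for every bounded convex domain $\Omega$ and every $f\in\cO(\Omega,W)$, a holomorphic spray over $f$ dominating on $f^{-1}(A'')$. The relevant tangent directions along $A''$ are furnished by the globally defined holomorphic vector fields $\delta_j=a\,\partial_{x_j}+(\partial g/\partial x_j)\,\partial_b$, $\epsilon_j=b\,\partial_{x_j}+(\partial g/\partial x_j)\,\partial_a$ and $\theta_{ij}=(\partial g/\partial x_j)\,\partial_{x_i}-(\partial g/\partial x_i)\,\partial_{x_j}$, all tangent to $W$. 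At a point of $A''$ the $\delta_j$ and $\epsilon_j$ give $\partial_b$ and $\partial_a$, while the $\theta_{ij}$ give the tangent directions of $A'$; smoothness of $A'$ (that is, $dg\neq 0$ on $A'$) guarantees that together they span $\T W$ there.

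The crux is the construction of this $A''$-dominating spray. The difficulty is that the complete fields $\delta_j,\epsilon_j$ only span the $a,b$-directions at $\{a=b=0\}$, whereas the fields $\theta_{ij}$ that move along $A'$ are in general not complete, so they cannot simply be integrated. I would resolve this exactly as in the proof of Proposition~\ref{prop:genell1}: compose the local flows of $\delta_j,\epsilon_j,\theta_{ij}$ near the graph $\Gamma_f$ to obtain a local spray over $f$ that is dominating on $f^{-1}(A'')$, extend it to a continuous spray $\Omega\times\C^N\to W$ whose preimage of $A''$ is contained in $\Omega\times\{0\}$, and then invoke the Oka principle for elimination of intersections (Theorem~\ref{thm:okaprinciple}), which applies because $W\setminus A''=U$ is Oka, to deform it to a holomorphic spray agreeing with it to first order along $\Omega\times\{0\}$. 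This holomorphic spray is dominating on $f^{-1}(A'')$, so Corollary~\ref{cor:relell1} shows $W$ is Oka, and the localization above finishes the proof.
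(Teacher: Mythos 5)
Your first two reductions are exactly the paper's: you pass to $A=\{g=0\}\subset\C^{k+1}\times\{0\}$ with $g$ a generator of the ideal sheaf, cover $\Bl_A\C^n$ by the chart isomorphic to $\C^n$ and the charts isomorphic to $\{ab=g(x)\}\times\C^{n-k-2}$, and invoke Theorem~\ref{thm:localization}. The divergence is in how the hypersurface $W=\{(x,a,b):ab=g(x)\}$ is shown to be Oka. The paper does not attempt a self-contained argument here: it quotes Kaliman and Kutzschebauch \cite{KalKut}, whose theorem gives the density property of $W$ (hence ellipticity and the Oka property, $W$ being Stein). Your plan replaces that citation by the paper's own machinery (Corollary~\ref{cor:relell1} with $U=\{a\neq0\}\cup\{b\neq0\}$ plus Theorem~\ref{thm:okaprinciple}), and this is where there is a genuine gap, precisely at the step you yourself call the crux.

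The gap is that your extension claim is incompatible with the domination you need. Suppose $f^{-1}(A'')\neq\emptyset$ and let $F_0$ be any spray over $f$ which is holomorphic near $(x_0,0)$, where $f(x_0)\in A''$, and dominating there. Then $F_0(x_0,\cdot)$ is a submersion at $t=0$, so $F_0(x_0,\cdot)^{-1}(A'')$ is an analytic set of dimension $N-2$ through the origin (recall $A''$ has codimension $2$ in $W$); since domination forces $N\geq\dim W=k+2$, this set is positive-dimensional for every $k\geq1$ (the only new case, $k=0$ being Proposition~\ref{prop:tame}), and even for $k=0$ the nearby slices $F_0(x,\cdot)$ with $f(x)\notin A''$ close to $A''$ hit $A''$ at small $t\neq 0$. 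Hence $F_0^{-1}(A'')\not\subset\Omega\times\{0\}$, so the hypothesis $F_0^{-1}(A'')\subset X'$ of Theorem~\ref{thm:okaprinciple} fails for $X'=\Omega\times\{0\}$, and the theorem cannot be applied as you propose. Nor does enlarging $X'$ rescue the argument: Theorem~\ref{thm:okaprinciple} needs $X'$ to be a closed complex subvariety of $\Omega\times\C^N$ near which $F_0$ is holomorphic, whereas your $F_0^{-1}(A'')$ is controlled only inside the flow domain (which is thin in the parameters of the incomplete fields $\theta_{ij}$) and there is no reason it closes up to such a subvariety. This is not an accidental defect of the write-up: the whole mechanism of Proposition~\ref{prop:genell1} works by pinning the preimage of the subvariety to $X'$ via the cutoff functions $g_j$, which makes the spray degenerate in $t$ over that preimage — that is exactly why Proposition~\ref{prop:genell1} yields domination only on $f^{-1}(U)$ and never on $f^{-1}(Y\setminus U)$. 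Producing the tangential directions along $A''\cong A'$ (a hypersurface which may be hyperbolic, so the $\theta_{ij}$ admit no complete substitutes) is precisely what this machinery cannot deliver, and precisely what the density property theorem of \cite{KalKut} supplies. Keep your reduction, but at the crux cite \cite{KalKut} as the paper does.
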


\begin{proof}
We may assume that $A$ is a smooth hypersurface in $\C^{k+1}\times\{0\}\subset\C^n$.
Since the second Cousin problem on $\C^n$ is solvable (cf. \cite[\S 5.2]{ForSMHM}), there exists a holomorphic function $f\in\cO(\C^{k+1})$ such that $A=\{z\in\C^{k+1}:f(z)=0\}$ and $\d f\neq 0$ on $A$.
Then the blowup $\Bl_A\C^n$ can be described as follows:
\begin{align*}
\Bl_A\C^n=\{(z,z',w)\in\C^{k+1}\times\C^{n-k-1}\times\P^{n-k-1}:w=[f(z),z'_1,\ldots,z'_{n-k-1}]\}.
\end{align*}
Note that this is covered by the Zariski open subset
\begin{align*}
\{(z,z',w)\in\C^{k+1}\times\C^{n-k-1}\times\C^{n-k-1}&:z'=f(z)w\}\cong\C^n
\end{align*}
and Zariski open subsets of the same form
\begin{align*}
&\{(z,z',w)\in\C^{k+1}\times\C^{n-k-1}\times\C^{n-k-1}:f(z)=z'_1w_1,\ (z'_j)_{j=2}^{n-k-1}=z'_1(w_j)_{j=2}^{n-k-1}\} \\
\cong\,&\{(z,z'_1,w_1)\in\C^{k+1}\times\C\times\C:f(z)=z'_1w_1\}\times\C^{n-k-2}.
\end{align*}
Since the hypersurface $\{(z,z'_1,w_1)\in\C^{k+1}\times\C\times\C:f(z)=z'_1w_1\}$ is Oka by the result of Kaliman and Kutzschebauch \cite[Theorem 2]{KalKut}, the blowup $\Bl_A\C^n$ is also Oka by the localization principle.
\end{proof}

\appendix
\section{Non-Oka Blowups of $\C^n$}

In this appendix, we shall construct for each $n>1$ a discrete subset $D\subset\C^n$ such that the blowup $\Bl_D\C^n$ of $\C^n$ along $D$ is not Oka (Example \ref{ex:blowup}).
Recall that the blowup of $\C^n$ along any tame discrete subset is Oka (Proposition \ref{prop:tame}).

The construction is essentially due to Rosay and Rudin; more precisely, it is based on their construction of a discrete subset in $\C^n$ which is unavoidable by nondegenerate maps \cite[Theorem 4.5]{RosRud}.
Since our construction is almost the same, we omit the details and refer the reader to \cite{RosRud}.
The only additional fact we need is that for a complex manifold $Y$ and a closed complex submanifold $A\subset Y$ of codimension at least two, the blowup $\pi:\Bl_AY\to Y$ is not a submersion at any point of $\pi^{-1}(A)\subset\Bl_AY$.

In the construction below, we always assume that $n>1$ and a discrete subset is closed.
The following lemma can be proved in the same way as \cite[Lemma 4.3]{RosRud}.
Here, $JF(z)$ is the Jacobian determinant of $F$ at $z\in\C^n$, and $\B^n=\{z\in\C^n:|z|<1\}$ is the unit ball in $\C^n$.

\begin{lem}
Given $0<a_1<a_2$, $0<r_1<r_2$ and $c>0$, let $\Gamma$ denote the set of all $F\in\cO(a_2\B^n,r_2\B^n)$ such that $|F(0)|\leq r_1/2$ and $\sup_{|z|\leq a_1}|JF(z)|\geq c$.
Then there exists a finite set $E=E(a_1,a_2,r_1,r_2,c)\subset\partial(r_1\B^n)$ with the following property: If $F\in\Gamma$ factors through the blowup $\Bl_E(r_2\B^n)\to r_2\B^n$, then $F(a_1\B^n)\subset r_1\B^n$ holds.
\end{lem}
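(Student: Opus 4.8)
The plan is to reduce the factoring hypothesis to a statement about the \emph{regular locus} of $F$, and then to manufacture the finite set $E$ by a normal–families compactness argument on the sphere $S=\partial(r_1\B^n)$. First I would exploit the one extra ingredient recorded above, namely that $\pi\colon\Bl_E(r_2\B^n)\to r_2\B^n$ fails to be a submersion at every point of $\pi^{-1}(E)$. If $F\in\Gamma$ factors as $F=\pi\circ\widetilde F$ for a holomorphic lift $\widetilde F\colon a_2\B^n\to\Bl_E(r_2\B^n)$, and $F(z_0)\in E$ for some $z_0$, then $\widetilde F(z_0)\in\pi^{-1}(E)$ and the chain rule $\d F_{z_0}=\d\pi_{\widetilde F(z_0)}\circ\d\widetilde F_{z_0}$ forces $\operatorname{rank}\d F_{z_0}\le\operatorname{rank}\d\pi_{\widetilde F(z_0)}<n$, so $JF(z_0)=0$. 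Writing $R_F=\{z\in a_2\B^n:JF(z)\neq0\}$, this says precisely that factoring through $\Bl_E$ implies $E\cap F(R_F)=\emptyset$. Hence it suffices to produce a finite set $E\subset S$ such that \emph{every} $F\in\Gamma$ with $F(a_1\B^n)\not\subset r_1\B^n$ satisfies $E\cap F(R_F)\neq\emptyset$; its contrapositive, applied to a map that factors through $\Bl_E$, gives $F(a_1\B^n)\subset r_1\B^n$. The essential feature here is that the blowup only lets us control the image of the regular locus $F(R_F)$, not the full image of $F$, so the whole covering argument must be phrased through $F(R_F)$.

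Next I would show that each such ``bad'' $F$ meets $S$ through a regular point. Since $\sup_{|z|\le a_1}|JF|\ge c>0$, the Jacobian $JF$ does not vanish identically on the connected set $a_2\B^n$, so $\{JF=0\}$ is a proper analytic subvariety of real dimension at most $2n-2$ and $R_F$ is open and dense. Choosing $z^*\in a_1\B^n$ with $|F(z^*)|\ge r_1$ and recalling $|F(0)|\le r_1/2$, I would join a regular point near $0$ (where $|F|<r_1$) to a regular point near $z^*$ (where $|F|>r_1$) by a generic path inside the convex ball $a_1\B^n$; the dimension count $1+(2n-2)<2n$ keeps the path inside $R_F$, and along it the continuous function $|F|$ attains the value $r_1$. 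This yields $z_0\in R_F$ with $p_F:=F(z_0)\in S$, so $F(R_F)\cap S\neq\emptyset$. The borderline situation $\sup_{|z|\le a_1}|F|=r_1$ is absorbed using the slack $a_1<a_2$ together with the density of regular points, exactly as in \cite[Lemma 4.3]{RosRud}.

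Finally I would make $E$ finite by compactness. As $|F|<r_2$ on $a_2\B^n$, the family $\Gamma$ is normal and its closure $\overline\Gamma$ is compact, the closed conditions $|F(0)|\le r_1/2$ and $\sup_{|z|\le a_1}|JF|\ge c$ passing to limits; thus $\cB=\{F\in\overline\Gamma:\sup_{|z|\le a_1}|F|\ge r_1\}$ is compact and contains every bad map. For each $F\in\cB$ fix $p_F\in S\cap F(R_F)$, say $p_F=F(z_0)$ with $JF(z_0)\neq0$. Compact–open convergence gives $C^1$–convergence on compacta via Cauchy estimates, so for $G$ close to $F$ one has $JG(z_0)\neq0$ and, by a quantitative inverse function theorem with constants uniform over the bounded family, $G(R_G)$ covers a Euclidean ball of a fixed radius $\epsilon>0$ about $G(z_0)$; since $G(z_0)\to F(z_0)=p_F$, there is a neighborhood $\cN_F$ of $F$ in $\overline\Gamma$ on which $p_F\in G(R_G)$. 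Covering the compact set $\cB$ by finitely many $\cN_{F_1},\dots,\cN_{F_m}$ and taking $E=\{p_{F_1},\dots,p_{F_m}\}\subset S$ finishes the construction: any bad $F$ lies in some $\cN_{F_i}$, whence $p_{F_i}\in F(R_F)$ and $E\cap F(R_F)\neq\emptyset$.

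The genuine obstacle is the uniformity in this last step, that one fixed finite $E$ must work for the whole family $\Gamma$; this is what forces the normal–families packaging and the quantitative inverse function theorem, and it is also where the regular–locus bookkeeping of the first step must be carried through limits, since $p_F$ has to persist in $G(R_G)$ for all nearby $G$. By contrast the passage from ``$F$ factors through $\Bl_E$'' to ``$E$ avoids $F(R_F)$'' is elementary once the non-submersivity of $\pi$ over its centre is in hand, and it is precisely the modification that turns Rosay and Rudin's avoidance argument into one about blowups.
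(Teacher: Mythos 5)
Your proposal is correct and takes essentially the same route as the paper, whose own proof is simply the Rosay--Rudin compactness argument for \cite[Lemma 4.3]{RosRud} combined with the one new observation you also isolate: since the blowup $\pi:\Bl_E(r_2\B^n)\to r_2\B^n$ is nowhere a submersion over $E$, the chain rule shows that factoring through $\Bl_E$ forces $E\cap F(R_F)=\emptyset$, so the whole avoidance argument need only be run on the regular locus, and your covering-by-neighborhoods packaging of the normal-families step is just a reorganization of the same compactness argument. One small caution: in the borderline case $\sup_{|z|\le a_1}|F|=r_1$ the decisive tool is the maximum principle --- if no regular point had $|F|>r_1$, then density of $R_F$ would give $|F|\le r_1$ on all of $a_2\B^n$ with an interior maximum at a point of $\overline{a_1\B^n}\subset a_2\B^n$, forcing $|F|\equiv r_1$ and contradicting $|F(0)|\le r_1/2$ --- rather than merely the slack $a_1<a_2$ and density of regular points, though this is indeed contained in the Rosay--Rudin argument you defer to.
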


For $k\in\N$, choose $k/2=a_1<a_2<\cdots<3k/4$ and $k=r_1<r_2<\cdots$ with $\lim_{j\to\infty}r_j=\infty$.
Using the notation of the above lemma, we define $D_k=\bigcup_{j\in\N}E(a_j,a_{j+1},r_j,r_{j+1},1/k)\subset\C^n$.
It is a discrete subset and has the following property.
For the details, see the proof of \cite[Lemma 4.4]{RosRud}.

\begin{lem}\label{lem:Dk}
For each $k\in\N$, there exists a discrete subset $D_k\subset\C^n\setminus k\B^n$ with the following property:
If $F\in\cO(k\B^n,\C^n)$ satisfies
\begin{enumerate}
\item $|F(0)|\leq k/2$,
\item $\sup_{|z|\leq k/2}|JF(z)|\geq 1/k$, and
\item $F$ factors through the blowup $\Bl_{D_k}\C^n\to\C^n$,
\end{enumerate}
then the inclusion $F((k/2)\B^n)\subset k\B^n$ holds.
\end{lem}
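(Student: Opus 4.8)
The plan is to obtain the inclusion $F((k/2)\B^n)\subset k\B^n$ by a finite downward induction on the nested scales $(a_j,r_j)$, with the foregoing lemma serving as the inductive step and a crude compactness bound to initiate the descent. Recall that $a_1=k/2$ and $r_1=k$, that the $a_j$ increase while staying below $3k/4$, and that the $r_j$ increase to $\infty$. That $D_k$ is discrete and contained in $\C^n\setminus k\B^n$ is immediate from the construction, since each finite set $E(a_j,a_{j+1},r_j,r_{j+1},1/k)$ lies in $\partial(r_j\B^n)$ and $r_j\to\infty$; so only the stated implication requires an argument. Fix $F\in\cO(k\B^n,\C^n)$ satisfying (1)--(3).

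First I would initialize the induction. Since every $a_j<3k/4<k$, the closure $\overline{(3k/4)\B^n}$ is a compact subset of $k\B^n$ on which $F$ is continuous, hence bounded; because $r_j\to\infty$, I may therefore pick an index $m$ with $F((3k/4)\B^n)\subset r_m\B^n$, and in particular $F(a_m\B^n)\subset r_m\B^n$.

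The heart of the matter is the descent step: if $F(a_{j+1}\B^n)\subset r_{j+1}\B^n$, then $F(a_j\B^n)\subset r_j\B^n$. To apply the foregoing lemma to the data $(a_j,a_{j+1},r_j,r_{j+1},1/k)$, I would check its three input conditions on $F$. The hypothesis of the step is precisely $F\in\cO(a_{j+1}\B^n,r_{j+1}\B^n)$; assumption (1) together with $r_j\ge r_1=k$ gives $|F(0)|\le k/2=r_1/2\le r_j/2$; and assumption (2) together with $a_j\ge a_1$ gives $\sup_{|z|\le a_j}|JF(z)|\ge\sup_{|z|\le a_1}|JF(z)|\ge 1/k$. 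Writing $E=E(a_j,a_{j+1},r_j,r_{j+1},1/k)$, we have $E\subset D_k$ and $E\subset\partial(r_j\B^n)\subset r_{j+1}\B^n$, so the blowup of $r_{j+1}\B^n$ along $D_k\cap r_{j+1}\B^n$ admits a blow-down morphism to $\Bl_E(r_{j+1}\B^n)$ contracting the exceptional divisors over the remaining centers. Since $F(a_{j+1}\B^n)\subset r_{j+1}\B^n$, the lift of $F$ to $\Bl_{D_k}\C^n$, restricted to $a_{j+1}\B^n$, takes values in the part lying over $r_{j+1}\B^n$, namely $\Bl_{D_k\cap r_{j+1}\B^n}(r_{j+1}\B^n)$, and composing with the blow-down exhibits $F$ as factoring through $\Bl_E(r_{j+1}\B^n)\to r_{j+1}\B^n$. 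The foregoing lemma then yields $F(a_j\B^n)\subset r_j\B^n$.

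Applying this descent step successively for $j=m-1,m-2,\dots,1$, starting from the base case $F(a_m\B^n)\subset r_m\B^n$, produces after finitely many iterations the conclusion $F((k/2)\B^n)=F(a_1\B^n)\subset r_1\B^n=k\B^n$. The step I expect to require the most care is the passage from $\Bl_{D_k}$ to $\Bl_E$: one must verify that the holomorphic lift of $F$ to $\Bl_{D_k}\C^n$, after restriction to the part lying over $r_{j+1}\B^n$, genuinely descends along the contraction of the exceptional divisors over $(D_k\cap r_{j+1}\B^n)\setminus E$, so that condition (3) of the foregoing lemma is available at every scale. This is exactly where the hypothesis that the blown-up centers have codimension at least two, and that the points of $E$ are among them, enters.
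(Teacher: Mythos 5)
Your proof is correct and is essentially the argument the paper invokes by reference, namely the nested-scale induction from Rosay--Rudin's Lemma 4.4: the paper's cited proof runs the same induction (phrased there as a contradiction with the boundedness of $F$ on $\overline{(3k/4)\B^n}$, which is equivalent to your downward descent from a large index $m$). Your verification of condition (3) at each scale, via the blow-down $\Bl_{D_k\cap r_{j+1}\B^n}(r_{j+1}\B^n)\to\Bl_E(r_{j+1}\B^n)$ afforded by the disjointness of the centers, is exactly the point the paper leaves implicit and is handled correctly.
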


Recall that an $n$-dimensional complex manifold $Y$ is said to be {\em Brody volume hyperbolic} if there is no holomorphic map $\C^n\to Y$ which is locally biholomorphic at some point (such a map is said to be {\em nondegenerate}).
Clearly, no Brody volume hyperbolic manifold is Oka.
By using the above lemmas, we may construct the following example of a discrete subset with the desired property.

\begin{ex}\label{ex:blowup}
For each $n>1$, let us consider the discrete subset $D=\bigcup_{k\in\N}D_k\subset\C^n$ by using $D_k$ in Lemma \ref{lem:Dk}.
Then the blowup $\Bl_D\C^n$ is Brody volume hyperbolic.
It can be shown as follows.
Let $\pi:\Bl_D\C^n\to\C^n$ denote the blowup of $\C^n$ along $D$.
Assume that there exists a nondegenerate map $f:\C^n\to\Bl_D\C^n$.
Then the composition $F=\pi\circ f:\C^n\to\C^n$ is also nondegenerate and $F$ factors through each of the blowups $\Bl_{D_k}\C^n\to\C^n,\ k\in\N$.
By construction, $F((k/2)\B^n)\subset k\B^n$ for all sufficiently large $k\gg 0$.
This growth condition and nondegeneracy imply that $F$ is an affine isomorphism.
This contradicts the definition of $F=\pi\circ f$.
\end{ex}

\section*{Acknowledgement}

I would like to thank my supervisor Katsutoshi Yamanoi for many helpful comments.
In particular, I learned from him how to construct Example \ref{ex:blowup}.
I also thank Franc Forstneri\v{c} for useful remarks and suggestions.
This work was supported by JSPS KAKENHI Grant Number JP18J20418.

\end{document}